\newcommand{\cdddot}{\mathrel{\Shortstack{{.} {.} {.}}}}
\def\XXint#1#2#3{{\setbox0=\hbox{$#1{#2#3}{\int}$ }
\vcenter{\hbox{$#2#3$ }}\kern-.6\wd0}}
\setlist[enumerate,1]{font=\normalfont}
\setlist[itemize,1]{font=\normalfont}
\newlist{thmlist}{enumerate}{1}
\setlist[thmlist]{label=(\roman{thmlisti}),
	ref=(\roman{thmlisti}),font=\normalfont,
	noitemsep}
\newtheorem{theorem}{Theorem}[section]
\newtheorem{lemma}[theorem]{Lemma}
\newtheorem{definition}[theorem]{Definition}
\newtheorem{rem}[theorem]{Remark}
\newcommand*{\di}{\mathop{}\!\mathrm{d}}
\newcommand{\N}{\mathbb{N}}
\newcommand{\R}{\mathbb{R}}
\DeclareMathOperator*{\argmin}{arg\,min}
\newcommand{\Id}{\mathbf{Id}}
\newcommand{\id}{\mathbf{id}}
\newcommand{\eps}{\varepsilon}
\newcommand{\defas}{\coloneqq}
\newcommand{\sym}{\mathrm{sym}}
\newcommand{\aC}{C_0}
\newcommand{\ac}{c_0}
\def\Id{\mathbf{Id}}
\def\id{\mathbf{id}}
\def\eps{\varepsilon}
\def\dist{\operatorname{dist}}
\def\XXint#1#2#3{{\setbox0=\hbox{$#1{#2#3}{\int}$}
     \vcenter{\hbox{$#2#3$}}\kern-.5\wd0}}
\DeclareMathOperator{\diver}{div}
\DeclarePairedDelimiterX\setof[1]\{\}{#1}
\DeclarePairedDelimiterX\abs[1]\lvert\rvert{#1}
\DeclarePairedDelimiterX\norm[1]\lVert\rVert{#1}
\DeclarePairedDelimiterX\sprod[2]\langle\rangle{#1, #2}
\newcommand{\REV}{\color{black}}
\newcommand{\END}{\color{black}}
\newcommand{\REVB}{\color{black}}
\numberwithin{equation}{section}
\title[Nonlinear relations of viscous stress and strain rate in nonlinear Viscoelasticity]{Nonlinear relations of viscous stress and strain rate in nonlinear Viscoelasticity}
\author{Lennart Machill}
\subjclass[2020]{74A30, 74D10, 35A15, 35Q74, 74G22}
 \keywords{Viscoelasticity, metric gradient flows, dissipative distance, curves of maximal slope, minimizing movements.}
 \address[Lennart Machill]{
	Institute for Applied Mathematics, \\
	University of Bonn, \\
	Endenicher Allee 60, D-53115 Bonn, Germany \\
	https://orcid.org/0000-0002-6477-1467 \\
  }
  \email{lmachill@uni-bonn.de}
\begin{document}

\maketitle

\begin{abstract}
We consider a Kelvin-Voigt model for viscoelastic second-grade materials, where the elastic and the viscous stress tensor both satisfy \REV frame-indifference. \END  Using a rigidity estimate by {\sc Ciarlet and Mardare} \cite{CiarletMardare}, existence of weak solutions is shown by means of a frame-indifferent time-discretization scheme. Further, the result includes viscous stress tensors which can be calculated by nonquadratic polynomial densities. Afterwards, we investigate the long-time behavior of solutions in the case of small external loading and initial data. Our main tool is the abstract theory of metric gradient flows \cite{AGS}.
\end{abstract}

\section{Introduction}

The Kelvin-Voigt rheology is a classical concept in engineering science and continuum mechanics. 
It serves as a tool for describing the evolution of viscoelastic solids, where slow continuous deformations are observed, tending to a recoverable configuration of the material.
The key feature of  this model is the stress tensor, which is determined by summation of the elastic and the viscous stress tensor.  
\REV Relying on Hooke's law, these tensors are typically assumed to depend linearly on the displacement gradient and its time derivative, respectively. \END
However, this relation may break down if the deformation is rather large and if the undeformed and deformed configurations are significantly different.
In this article, we focus on a version of the Kelvin-Voigt model for large deformations and address this latter effect. 
More precisely, we consider the balance of momentum for second-grade
nonsimple materials without inertia, which is governed by the system of
equations 
\begin{align}\label{eq:viscoel-nonsimple}
-{\rm div}\Big( \partial_F W(x, \nabla y)  -   {\rm div} (\partial_GP(x,\nabla^2 y)) + \partial_{\dot{F}}R(x,\nabla y,\partial_t \nabla y)  \Big) =  f        \ \ \  \text{ in $ [0,T] \times    \Omega$}.
\end{align}
Here, $[0, T]$ is a process time interval for some   time horizon  $T > 0$, $\Omega \subset \R^d$ is a bounded domain representing the reference configuration, and  $y \colon [0, T] \times \Omega \to \R^d$ indicates a Lagrangian \emph{deformation}. By $f \colon [0, T] \times \Omega \to \R^d$ we indicate a volume density of \emph{external forces}  acting on $\Omega$. 
The elastic stress tensors can be calculated by the densities $W \colon \Omega \times \R^{d\times d}   \to \R \cup \setof{+\infty}$ and $P \colon\Omega \times \R^{d \times d \times d} \to [0,\infty)$, where $F$ and $G$ are the place-holders for the gradient and Hessian of $y$, respectively.   $R \colon \Omega \times \R^{d \times d} \times \R^{d \times d}   \to \R $ is the density of the viscous stress tensor, depending on both strain and strain rate with $\dot F$ being the place-holder of the strain rate.
Respecting the fundamental concept of \REV frame-indifference \END in nonlinear continuum mechanics, we are interested in densities $W$ and $R$ that can be rewritten in terms of the right Cauchy-Green tensor $C = F^TF$ and its derivative in time $\dot C = \dot F^T F + F^T \dot F$, i.e., we assume that there exist nonnegative functions $\hat W$ and $\hat R$ such that
\begin{equation*}
 W(x,F) = \hat W (x,C) \qquad \text{and} \qquad  R(x,F,\dot F) = \hat R(x,C,\dot C).
\end{equation*} 
For a thorough discussion of the latter principle in the context of elastic and viscous stresses, we refer to {\sc Antman} \cite{Antmann04Nonlinear,Antmann98Physically}. 
 $P$ corresponds to a second-order regularizing term, and a precise definition of the corresponding stress tensor is provided in \eqref{definitiondivergencesecondorder} below.
 
 \REV
We start by giving an overview of the existence theory for the underlying equations without the second gradient.
Given initial data appropriately close to a smooth equilibrium, existence of weak solutions was proven by {\textsc{Potier-Ferry}} in \cite{potier-ferry-1,potier-ferry-2}.
At present, there is no existence theory for \emph{frame-indifferent} viscous stresses at large strains since this condition leads to the loss of monotonicity in the strain rate, see e.g.\ \cite[Appendix~B]{demoulini} and \cite[Section~4(b)]{sengul2}.
Nevertheless, under different monotonicity assumptions significant progress has been made in the first quarter of this century \cite{demoulini,Lewick,Tvedt}.
Moreover, we highlight that the Poynting–Thomson model, which extends viscoelasticity beyond the Kelvin-Voigt rheology by including both Maxwell and Kelvin-Voigt elements, has been studied, proving the existence of energetic solutions \cite{Poynting}.
\END

To date, existence results of weak solutions can only be guaranteed \REV for second-grade materials, \END which is a concept going back to  {\sc Toupin} \cite{Toupin62, Toupin64}. In general, this principle indeed proved to be useful in mathematical continuum mechanics, see e.g.~\cite{BallCurrieOlver81Null, Batra76Thermodynamics, HealeyKroemer:09, MielkeRoubicek16Rateindependent, Podio02Contact}.
If $R$ is quadratic in $\dot F$, i.e., viscous stress tensor and strain rate are related \emph{linearly}, existence of weak solutions to \eqref{eq:viscoel-nonsimple} has been shown in \cite{RBMFMK, positivity24, MielkeRoubicek}, including an additional coupling with a nonlinear heat equation \REV in connection with the laws of thermodynamics. \END This approach has been extended in various directions over the last years, including models allowing for self-contact   \cite{gravina, Kroemer},  a nontrivial coupling with a diffusion equation \cite{liero} and a Cahn-Hillard equation \cite{Leonie}, inertial effects \cite{thermoelasto,Schwarzacher}, homogenization \cite{gahn}, or applications to fluid-structure interactions \cite{Schwarzacher}.
For the derivation of effective theories for (thin) materials, we refer to \cite{RBMFMK,positivity24,RBMFLM,MFMK,MFMKDimension,MFLMDimension2D1D,MFLMDimension3D1D,Oosterhout}.
While the aforementioned results are formulated using the Lagrangian approach, several recent works employ the alternative Eulerian perspective instead, see \cite{Roubicek23Eulerian,Roubicek23Eulerian2,Roubicek23Eulerian3,Roubicek23Eulerian4}. 
Eventually, a comprehensive overview of the model itself and further related results can be found in \cite{sengul}.

While previous existence theories rely on a linear relation between viscous stress and strain rate, to the best of our knowledge, existence has not been proved in the nonquadratic case, except for a result neglecting frame-indifference \cite{Bulicek}. The simplest choice for such a potential is $R(x,F,\dot F) = {\tilde{p}}^{-1}\vert \dot F^T F + F^T \dot F\vert^{\tilde{p}} $ for some $\tilde{p} \neq 2$, leading to the viscous stress tensor
\begin{align}\label{viscousstresstensor}
\partial_{\dot{F}}R(x,\nabla y,\partial_t \nabla y)  = 2 \nabla y \vert   (\partial_t \nabla y)^T \nabla y + (\nabla y)^T  \partial_t \nabla y\vert^{\tilde{p}-2} \big( (\partial_t \nabla y)^T \nabla y + (\nabla y)^T \partial_t \nabla y  \big) .
\end{align}
 In the previously mentioned results, one shows the existence of weak solutions by relying on a Minty-trick for monotone operators to handle the nonlinearity induced by $P$. However, it is not straightforward to apply this argument in the presence of an additional nonlinear viscous stress tensor as in \eqref{viscousstresstensor}.
The first main theorem of this article addresses the existence of weak solutions. By introducing a new application of the rigidity estimate by {\sc Ciarlet and Mardare} \cite{CiarletMardare}, which itself is based on the rigidity estimate by {\sc Friesecke, James and Müller} \cite{FrieseckeJamesMueller:02}, we circumvent the latter issue by making use of a gradient flow formulation of \eqref{eq:viscoel-nonsimple} in a metric space \REV \cite{MFMK,MOS}. \END

We now highlight the main strategy and challenges of this problem by treating the specific choice of $R$ in \eqref{viscousstresstensor} and setting $f = 0 $ for the sake of simplicity. Fixing a timestep $\tau>0$ such that $T\tau^{-1} \in \N$ and an initial deformation $y^0$, we inductively solve the minimization problems
\begin{align}\label{eq: disc2}
y^n \in \argmin\limits_y \frac{1}{\tilde p \tau^{\tilde p -1} } \int_\Omega \vert (\nabla y^{n-1})^T \nabla y^{n-1} - (\nabla y)^T \nabla y \vert^{\tilde p} \di x + \int_\Omega W(x,\nabla y) \di x + \int_\Omega P(x,\nabla^2 y) \di x
\end{align}
for $n = 1, \ldots, T \tau^{-1}$. 
Then, at least formally, limits of suitably defined interpolations of the time-discrete solutions converge to solutions of \eqref{eq:viscoel-nonsimple} as $\tau \to 0$. 
Putting this approach into a rigorous framework, we show convergence of time-discrete approximations to so-called \emph{curves of maximal slope}. Curves of maximal slope satisfy a certain energy-dissipation-balance which allows to relate these curves to weak solutions of the system \eqref{eq:viscoel-nonsimple}. In the limiting passage $\tau \to 0$, 
the crucial difficulty is proving a suitable bound on $y^{n} - y^{n-1}$. To overcome this issue, the rigidity estimates in \cite{CiarletMardare} yield the bound
\begin{align}\label{Ciarletintro}
\int_\Omega \vert \nabla y^{n-1} - \nabla y^n \vert^{\tilde p} \di x \leq C \int_\Omega \vert (\nabla y^{n-1})^T \nabla y^{n-1} - (\nabla y^n)^T \nabla y^n \vert^{\tilde p}  \di x,
\end{align}
where the constant $C>0$ depends on $\nabla y^{n-1}$, but can be chosen uniformly among functions in the set $K \defas \{ y \in W^{2,p}(\Omega;\R^d) : \det \nabla y > \mu , \ \Vert \nabla y \Vert_{C^\alpha(\Omega;\R^{d \times d} ) }< \mu^{-1} \}$ for $\mu>0$. 
Indeed, as the scheme provides a bound on $ \int_{\Omega}   W(x, \nabla y^n(x)) +  P(x, \nabla^2 y^n(x))   \, {\textrm d}x  $ which is uniform in $n $, suitable growth conditions on $W$ and $P$ then imply that $(y^n)_n$ lies in $K$. Notice that this argument fails in the absence of $P$.

Whereas convergence of time-discrete approximations to curves of maximal slope has already been shown \REV for quadratic dissipation potentials \END in the case of small external loading and initial data \cite{MFMK,MFLMDimension3D1D}, the corresponding result for large strains is new and established for $\tilde p \in \REVB (1,+\infty)$, \END see Theorem~\ref{maintheorem1}.
Comparing \eqref{eq: disc2} with the discretization schemes in \cite{RBMFMK,MielkeRoubicek}, a further advantage is that the functional minimized in \eqref{eq: disc2} is \REV frame-indifferent, \END i.e., this important feature of the model is already satisfied on the time-discrete level. We prefer this approximation, as otherwise rigid body motions generate dissipation, suggesting an incorrect physical choice.
 \REV This dissipation distance might also be applicable for quadratic dissipation potentials in the presence of inertial effects, by following the variational approximation scheme developed by \cite{Schwarzacher}. It would be interesting if more general polynomial viscous densities can be used in this context. However, the adaptation of the gradient flow structure to such a problem of hyperbolic type seems to be challenging.  \END

So far, the long-time behavior of solutions for large strains has been investigated in one dimension \cite{AndrewsBall,Ballsengul}, and, to the best of our knowledge, results in higher dimensions are currently unavailable.
The second main theorem of this article addresses the long-time behavior of weak solutions in the case of \REV \emph{small} \END  external loading and initial data.
 We show that for $\tilde p \in (1,2)$ and $\tilde p = 2$ the elastic energy decays polynomially and exponentially, respectively, while
for $\tilde p > 2$ equilibrium is reached in finite time. Here, we benefit from the formulation as a metric gradient flow, inspired by the abstract theory in \cite{AGS}.
For small strains, one can show $\lambda$-convexity of the elastic energy with $\lambda>0$ along convex combinations of deformations, while the metric satisfies a generalized convexity property, see Lemma~\ref{lem:localconv}(iii).
Then, the resulting representation of the local slope allows for a control on the time-derivative of the elastic energy with the elastic energy itself. 
Heuristically, if both $\nabla y$ and $\partial_t \nabla y$ in \eqref{eq:viscoel-nonsimple} depend linearly on \emph{constant} tensors $\partial_FW$ and $\partial_{\dot F} R$, respectively, it is well known that solutions decay exponentially, i.e., our result is consistent with the linearized case.

The plan of the paper is as follows. In Section~\ref{section:2}, we introduce the model in more detail and state our main results.
Section~\ref{sec: auxi-proofs} recalls the relevant definitions about curves of maximal slope and presents an abstract theorem concerning the convergence of time-discrete solutions to curves of maximal slope.   
This abstract theory is applied in Section~\ref{sec:3dproperties}, where we prove the existence of weak solutions to \eqref{eq:viscoel-nonsimple}.
Finally, the long-time behavior is addressed in Section~\ref{sec:longtime}.

\subsection*{Notation} 

In what follows, we use standard notation for Lebesgue spaces, $L^p(\Omega)$, which are measurable maps on $\Omega\subset\R^d$, $d \in \N$, integrable with the $p$-th power (if $1\le p<+\infty$) or essentially bounded (if $p=+\infty$). Sobolev spaces,  written $W^{k,p}(\Omega)$, denote the linear spaces of  maps  which, together with their weak derivatives up to the order $k\in\N$, belong to $L^p(\Omega)$. 
    Moreover, for a function $\hat v \in W^{k,p}(\Omega)$ the set $W^{k,p}_{\hat v}(\Omega)$ contains maps from $W^{k,p}(\Omega)$ having boundary conditions (in the sense of traces) up to the   $(k-1)$-th  order with respect to $\hat v$. 
If the target space is a Banach space $E \neq \R$, we use the usual notion of Bochner-Sobolev spaces, written $W^{k,p}(\Omega;E)$.  For more details on Sobolev spaces and their   duals,   we refer to \cite{AdamsFournier:05}. $\nabla$ and $\nabla^2$ denote the spatial gradient and Hessian, respectively, and $\partial_t$ indicates a time derivative. Further, $\Id \in \R^{d \times d}$ is the identity matrix. Finally, $\vert A \vert$   stands for   the Frobenius norm  of a matrix $A \in \R^{d\times d}$,   and ${\rm sym}(A) = \frac{1}{2}(A^T + A)$ and ${\rm skew}(A) = \frac{1}{2}(A - A^T)$ indicate the symmetric and skew-symmetric part, respectively.  
\REV Finally, $c_0>0$ and $C_0 >0$ are positive constants, while $C>0$ is a generic constant which may vary from line to line. \END

 \section{The model  and main results}\label{section:2}
 \subsection{The variational setting}\label{section:variational}

 In this section, we describe the model and discuss the variational setting.  Following the discussion in \cite[Section 2.2]{MOS} and \cite[Section 2]{MFMK}, we model \eqref{eq:viscoel-nonsimple} as a metric gradient flow. For this purpose, we need to specify three main   ingredients:  the state space that contains admissible deformations of the material, the    elastic energy that drives the evolution, and  the dissipation mechanism represented by a distance.
 
 \subsection*{Elastic energy}

Let $\Omega\subset \R^d$ be an open, bounded set with Lipschitz boundary, denoting the reference configuration of the material. 
We define the \emph{elastic energy} associated with a deformation $y\colon \Omega \to \R^d$ by
 \begin{align}\label{energyphieps}
 	\phi(y) = \int_{\Omega} W(x,\nabla y(x)) \,  {\rm d}x \ +  \int_{\Omega} P(x, \nabla^2 y(x)) \,   {\rm d}x  -  \int_{\Omega} f (x)\cdot \, y(x) \,  {\rm d}x .
 \end{align}
  Here, $W\colon \R^d \times \R^{d \times d} \to [0,\infty]$ denotes  a   frame-indifferent stored energy density with the usual assumptions in nonlinear elasticity. More precisely,  we  suppose that 
\begin{enumerate}[label=(W.\arabic*)]
  \item \label{W_regularity}  $W$  is  $C^3$ on $\R^d \times \REV GL^+(d) \END$,
  \item \label{W_frame_invariace} $W$ is \REV frame-indifferent, \END i.e., $W(x,QF) = W(x,F)$ for all $F \in GL^+(d)$ and $Q \in SO(d)$,
  \item \label{W_lower_bound}  $W(x,F) \ge \ac \big(|F|^2 + \det(F)^{-q}\big) - \aC$ for all $F \in GL^+(d)$, where $q \ge \frac{pd}{p-d}$,
\end{enumerate}
where \REV $\REV GL^+(d) \END := \lbrace F \in \R^{d \times d}: \det F>0 \rbrace$ and $SO(d) = \lbrace Q\in \R^{d \times d}: Q^T Q = \Id, \, \det Q=1 \rbrace$. \END
Besides the elastic energy density $W$ depending on the deformation gradient, we also consider a \emph{strain-gradient energy term} $P$ depending on the Hessian $\nabla^2 y$, adopting the concept of second-grade nonsimple materials, see \cite{Toupin62}. More specifically, for $p>d$, let $P\colon \R^d \times \R^{d\times d \times d} \to [0, +\infty) $ satisfy the following conditions:
 \begin{enumerate}[label=(H.\arabic*)]
  \item \label{H_regularity} $P$ is convex and $C^1$ on $\R^d \times \R^{d \times d\times d}$,
  \item \label{H_frame_indifference} Frame indifference: $P(x,QG) = P(x,G)$ for all $G \in \R^{d \times d \times d}$ and $Q \in SO(d)$,
  \item \label{H_bounds} $\ac \abs{G}^p \leq P(x,G) \leq \aC  \abs{G}^p $ and  $ \vert \partial_G P(x,G) \vert \leq C_0 \vert G \vert^{p-1} $  for all $G \in \R^{d \times d \times d}$.
\end{enumerate}
 The remaining term $f \in L^\infty(\Omega)$ denotes a dead load. 

 \subsection*{State space} 
Given $M>0$ and the Dirichlet boundary conditions $\hat y \in W^{2,p}(\Omega;\R^d)$, we introduce the set of admissible configurations by
 \begin{align}\label{assumption:clampedboundary}
 	\mathscr{S}_{M} := \Bigg\{ y \in W^{2,p}(\Omega;\R^d) : y = \hat y
  \quad {\rm on } \quad \partial \Omega , \quad \phi (y) \leq M \Bigg\}.
 \end{align} 
  In the next subsection, we will assign a suitable metric to $\mathscr{S}_{M}$ which simultaneously is related to the viscous stress.
 
 \subsection*{Dissipation mechanism:} 
Inspired by \cite[Example~2.4]{MOS}, we consider the function
\begin{align}\label{explicitchoiceofD}
D(x,F_1,F_2) := \vert A(x)F_1^TF_1 - A(x)  F_2^T F_2 \vert,
\end{align}
where $A(x)\in \REV GL^+(d) \END $ is an invertible matrix,
and  the following density of the viscous stress tensor
\begin{align}\label{defRx}
R(x, F,\dot F) := \frac{1}{{\tilde p}} \vert A(x) \dot C \vert^{\tilde p},
\end{align}
where $\dot C = \dot F^T F + F^T \dot F \in \R^{d \times d}_{\rm sym}$ \REV and  $\tilde p \in (1,+\infty)$. \END
A computation of the derivative with respect to $\dot F$ yields
\begin{align}\label{explicitex}
 \partial_{\dot{F}}R(x,F, \dot F) = \vert A(x)\dot C \vert^{\tilde p - 2}   \big( F\dot C A(x)^T A(x)  +   F A(x)^T A(x)\dot C \big).
\end{align}
Indeed, using Einstein's summation convention, the chain rule, and the symmetry of $\dot C$, we get
\begin{align*}
 \partial_{\dot{F}_{ij}}R(x,F, \dot F) &=  \vert A(x)\dot C \vert^{\tilde p - 2} \big( A(x)\dot C  \big)_{sl} \partial_{\dot{F}_{ij}} \big( A(x)\dot C \big)_{sl} \\
  &= \vert A(x)\dot C \vert^{\tilde p - 2} \big( A(x)\dot C  \big)_{sl} \partial_{\dot{F}_{ij}}   A(x)_{sk} \big( \dot F_{mk} F_{ml} +  F_{mk} \dot F_{ml}  \big) \\ 
    &= \vert A(x)\dot C \vert^{\tilde p - 2} \Big( \big( A(x)\dot C  \big)_{sl} A(x)_{sj} F_{il} + \big( A(x)\dot C  \big)_{sj} A(x)_{sk} F_{ik} \Big)  \\
    &= \vert A(x)\dot C \vert^{\tilde p - 2}   \big( F\dot C A(x)^T A(x)  +   F A(x)^T A(x)\dot C \big)_{ij} 
\end{align*}
for $i, j \in \{1, \hdots, d \}$.
The coefficients of $A(x)$ then define the materials viscosity and particularly include \emph{anisotropic} viscous stress tensors.
We assume that $D$ satisfies for all $F_1,F_2\in \REV GL^+(d) \END $ 
 \begin{enumerate}[label=(D.\arabic*)]
  \item \label{D_bound} $ C_0 \, \vert F_1^T F_1 - F_2^T F_2\vert \geq D(x, F_1, F_2) \geq c_0 \, \vert F_1^T F_1 - F_2^T F_2\vert$.
\end{enumerate}
 \ref{D_bound} ensures that the Frobenius norm of $A(\cdot)$ and its inverse are uniformly bounded.
Eventually, we associate a metric $\mathcal{D}$ to the set of admissible deformations $\mathscr{S}_{M}$, defined by 
\begin{align}\label{dissipationdistance}
\mathcal{D}(y_1,y_2) = \Big(\int_\Omega  D(x ,  \nabla y_1(x),  \nabla y_2(x) )^{\tilde{p}} {\rm d}x \Big)^{1/{\tilde{p}}}.
\end{align}
for $y_1,y_2 \in \mathscr{S}_{M}$. 
     In our setting as metric gradient flows, \ref{D_bound} is particularly needed for proving
 that $(\mathscr{S}_{M},\mathcal{D})$ is a metric space, see Lemma~\ref{th: metric space}(i) below.

\subsection{Equations of nonlinear viscoelasticity}

Our first goal is to prove the existence of weak solutions to \eqref{eq:viscoel-nonsimple}. 
 The weak formulation  reads as follows.
 \begin{definition}[Weak solutions of nonlinear viscoelasticity]\label{def:weakformulation}
 We say that $y \in L^\infty([0,\infty); W^{2,p}(\Omega;\R^d) \cap W^{1,\tilde p}([0,+\infty); W^{1,\tilde p}(\Omega;\R^d) )$ is a  weak solution to \eqref{eq:viscoel-nonsimple} if $y(0) = y_0$, $y = \hat y$ on $\partial \Omega$, and if for a.e.~$t \geq 0$ we have
 \begin{align}
&\int_\Omega \big( \partial_F W(x,\nabla y(t,x)) +  \partial_{\dot{F}}R(x,\nabla y(t,x), \partial_t  \nabla y(t,x))  \big) : \nabla \varphi(x) + \partial_G P(x, \nabla^2 y(t,x)) \cdddot \nabla^2 \varphi(x) \di x \notag \\& \qquad = \int_\Omega f(x) \cdot \varphi (x) \di x \label{weaksolutiondef}
\end{align}
for all $\varphi \in W^{2,p}_{0} (\Omega; \R^d)$.
 \end{definition}
\REV Here, $\cdot$, $:$, and $\cdddot$ denote the scalar product between first, second, and third order tensors, respectively. \END
The divergence of the derivative of $P$ is defined as
\begin{align}\label{definitiondivergencesecondorder}
(\diver ( \partial_G P(x,\nabla^2 y) ) )_{ij} \defas  \sum\limits_{k=1}^d \partial_k (\partial_G P(x,\nabla^2 y) )_{ijk}
\end{align}
 for $i,j \in \{1,...,d\}$, and, by using integration by parts, one can check that \eqref{eq:viscoel-nonsimple} is satisfied, provided that the weak solution is sufficiently smooth.
As in \cite{MielkeRoubicek}, the system is implicitly  complemented with the boundary conditions
\begin{align*}
   \partial_G P(\nabla^2 y) \REV : \END (\nu \otimes \nu)= 0   &\qquad \text{on } \partial \Omega,
\end{align*}
where $\nu$ denotes the outward pointing unit normal on $\partial \Omega$.
We refer to \cite{Kroemer, MielkeRoubicek} for an explanation and the derivation of this condition.
 
 \subsection{Main results}\label{sec:mainconvergenceresult}

Our first  result addresses the existence of solutions to the problem. We employ an abstract convergence result concerning metric gradient flows, more precisely for curves of maximal slope and their approximation via the minimizing movement scheme. The relevant notions about curves of maximal slope  are recalled in Section \ref{sec: auxi-proofs}. In particular, the local slope of $\phi$ with respect to $\mathcal{D}$ is denoted by    $|\partial \phi|_{\mathcal{D}}$, see Definition~\ref{main def2}.

\begin{theorem}[Existence of solutions]\label{maintheorem1}
Let $M>0$, \REVB $\tilde p \in  (1,+\infty)$, \END and $y_0 \in \mathscr{S}_{M}$. 
\begin{itemize}
\item[(i)] Then, there exists a function $y  \in L^\infty([0,+\infty);    \mathscr{S}_{M}   ) \cap W^{1,{\tilde p}}([0,+\infty);W^{1,{\tilde p}}(\Omega;\R^d))$    satisfying $y(0) = y_0$, and $y$  is a $\tilde p$-curve of maximal slope for $\phi$  with respect to  $|\partial \phi|_{\mathcal{D}}$.
\item[(ii)] Then, a $\tilde p$-curve of maximal slope for $\phi$  with respect to  $|\partial \phi|_{\mathcal{D}}$ is a weak solution to \eqref{eq:viscoel-nonsimple} in the sense of Definition~\ref{def:weakformulation}.
\end{itemize} 
\end{theorem}

The following theorem addresses the long-time behavior of solutions \REV under the assumption that the initial data and external forces are small.
In this case, we suppose that 
\begin{enumerate}[label=(S.\arabic*)]
	\item \label{Sadditional}  
	the initial condition in \eqref{assumption:clampedboundary} can be rewritten as $\hat y = \id + \delta \hat u$ for $\hat u \in W^{2,p}(\Omega;\R^d)$ and $\delta>0$. Similarly, we suppose that the body force is given by $f = \delta \tilde f$ for some $\tilde f \in L^\infty(\Omega)$.
	Further, we complement the conditions \ref{W_regularity}--\ref{W_lower_bound} by assuming that  $W(x,\Id) = 0$ and $W(x,F) \ge \ac \dist^2(F,SO(d))$ for all $F \in GL^+(d)$. 
  \end{enumerate}
\END
 
\begin{theorem}[Long-time behavior for small strains]\label{maintheorem3} 
\REV Assume that \ref{Sadditional} holds. \END
Consider a sequence of initial data $(y^\delta_0)_\delta$,  with $y^\delta_0 \in \mathscr{S}_{M}, M = M' \delta^2$ for some $M'>0$.
Let $(y^\delta)_\delta$ be a sequence of $\tilde p$-curves of maximal slopes for $\phi$  with respect to  $|\partial \phi|_{\mathcal{D}}$. 
\begin{enumerate}
\item[(i)] Then, there exists $\delta'>0$ such that for all $\delta< \delta'$ there exists a unique minimizer $y_\infty^\delta \in \mathscr{S}_{M}$ solving
\begin{align*}
\REV y_\infty^\delta \END = \argmin  \{ \phi(y) : y \in \mathscr{S}_{M}  \}.
\end{align*}
\item[(ii)] Let $\tilde p = 2$. Then, there exists $\delta'>0$ such that for all $\delta< \delta'$ we have $y^\delta(t) \REV \rightharpoonup \END y_\infty^\delta$ \REV weakly \END in $W^{2,p}(\Omega)$ as $ t \to + \infty$, and 
\begin{align*}
0 \leq \phi(y^\delta(t)) - \phi(y^\delta_\infty) & \leq \exp ({-C t}) (\phi(y^\delta_0) - \phi(y^\delta_\infty) ) 
\end{align*}
for a constant $C>0$ independent of $t$. The dissipation can be bounded from below by
\begin{align*}
2 \int_0^t \int_\Omega   R(x,\nabla y(s), \nabla \partial_t y (s)) \di x \di s \geq (1-\exp(-Ct)) \big( \phi( y^\delta_0) - \phi(y^\delta_\infty)\big) 
\end{align*}
for $t \in [0,+\infty)$.

\item[(iii)] Let $\tilde p \in (1,2)$.
Then, there exists $\delta'>0$ such that for all $\delta< \delta'$ we have $y^\delta(t) \REV \rightharpoonup \END y_\infty^\delta$ \REV weakly \END in $W^{2,p}(\Omega)$ as $ t \to + \infty$, and 
\begin{align*}
0 \leq \phi(y^\delta(t)) - \phi(y^\delta_\infty) & \leq   \left(  \big( \phi( y^\delta_0) - \phi(y^\delta_\infty)\big)^{1-s} - C (1-s) t \right)^{\frac{1}{1-s}}
\end{align*}
for $s = \frac{\tilde p}{2 \tilde p -2} >1$ and a constant $C>0$ independent of $t$.

\item[(iv)] Let $\tilde p \in (2,+\infty)$.  Then, there exist $T_{\rm ext}>0$  and  $\delta'>0$ such that for all $\delta< \delta'$ we have
 \begin{align*}
	  y^\delta(t) = y^\delta_\infty  \text{ for } t \geq T_{\rm ext}.
\end{align*}
\end{enumerate} 
\end{theorem}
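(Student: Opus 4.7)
The plan is to combine the energy--dissipation balance built into the definition of a $\tilde p$-curve of maximal slope with a Polyak--\L ojasiewicz-type inequality coming from the $\lambda$-convexity of $\phi$ along convex combinations of deformations in the small-strain regime, as furnished by Lemma~\ref{lem:localconv} and the local-slope representation of Lemma~\ref{lem:stronguppergradient3d}. Throughout, the monotone decay $\phi(y^\delta(\cdot))\le\phi(y_0^\delta)\le M'\delta^2$ will confine the trajectory to the regime in which this local convexity is effective.

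For part~(i), I would apply the direct method to $\phi$ on $\mathscr{S}_M$: coercivity is supplied by \ref{W_lower_bound} and \ref{H_bounds}, weak sequential compactness in $W^{2,p}(\Omega;\R^d)$ by $p>d$, and weak lower semicontinuity by the convexity of $P$ together with polyconvex-type properties of $W$. Uniqueness in the small-strain regime follows from the strict convexity along convex combinations in Lemma~\ref{lem:localconv}(iii), upon choosing $\delta'$ small enough that every minimizer lies in the $W^{2,p}$-neighborhood of $\id+\delta\hat u$ in which that lemma applies; the a priori bound $\phi(y_\infty^\delta)\le\phi(\hat y)\lesssim\delta^2$ is obtained by comparison with the boundary datum.

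For parts~(ii)--(iv), the central inequality is
\begin{equation*}
|\partial\phi|_{\mathcal{D}}^2(y)\ \ge\ 2\lambda\bigl(\phi(y)-\phi(y_\infty^\delta)\bigr)
\end{equation*}
for every $y\in\mathscr{S}_M$ with $\phi(y)\le M'\delta^2$. This is a standard consequence of the $\lambda$-convex variational inequality
\begin{equation*}
\phi(y)-\phi(y_\infty^\delta)\le|\partial\phi|_{\mathcal{D}}(y)\,\mathcal{D}(y,y_\infty^\delta)-\tfrac{\lambda}{2}\mathcal{D}(y,y_\infty^\delta)^2
\end{equation*}
(Lemma~\ref{lem:localconv}(iii) combined with the representation of the local slope in Lemma~\ref{lem:stronguppergradient3d}) upon maximizing the right-hand side in $\mathcal{D}(y,y_\infty^\delta)$. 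Setting $g(t)\defas\phi(y^\delta(t))-\phi(y_\infty^\delta)$ and using the energy--dissipation identity together with the metric-gradient-flow identity $|y^{\delta\prime}|_{\mathcal{D}}=|\partial\phi|_{\mathcal{D}}(y^\delta)$ a.e., one arrives at
\begin{equation*}
g'(t)\le -\tfrac{1}{\tilde p'}\,|\partial\phi|_{\mathcal{D}}^{\tilde p'}(y^\delta(t))\le -C\, g(t)^{\tilde p'/2}
\end{equation*}
with $\tilde p'=\tilde p/(\tilde p-1)$, so that $\tilde p'/2=s$. Elementary integration of $g'\le -Cg^s$ then produces exponential decay when $s=1$, i.e.\ $\tilde p=2$; the polynomial bound stated in~(iii) when $s>1$, i.e.\ $\tilde p\in(1,2)$; and finite-time extinction at some $T_{\rm ext}\le g(0)^{1-s}/(C(1-s))$ when $s<1$, i.e.\ $\tilde p>2$, where $T_{\rm ext}$ can be chosen independent of $\delta<\delta'$ since $g(0)\lesssim\delta^2$. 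The dissipation bound in~(ii) follows because for $\tilde p=2$ we have $2\int_0^t\int_\Omega R\di x\di s=\int_0^t|y^{\delta\prime}|_{\mathcal{D}}^2\di s=\phi(y_0^\delta)-\phi(y^\delta(t))$, into which the exponential estimate on $g$ is inserted.

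Finally, strong convergence $y^\delta(t)\to y_\infty^\delta$ in $W^{2,p}$ in parts~(ii) and~(iii) follows from $g(t)\to 0$: \ref{H_bounds} delivers a uniform $W^{2,p}$-bound on the trajectory, lower semicontinuity identifies every weak cluster point as a minimizer, hence equal to $y_\infty^\delta$ by~(i), and the uniform convexity of the Hessian part of $\phi$ upgrades weak to strong convergence. The main obstacle, in my view, is securing the PL inequality uniformly on the relevant sublevel set: this rests on the nontrivial fact, supplied by Lemmas~\ref{lem:localconv} and~\ref{lem:stronguppergradient3d}, that the non-Hilbertian metric $\mathcal{D}$ nevertheless supports the $\lambda$-convex variational inequality above, so that the Ambrosio--Gigli--Savar\'e framework applies and yields the quadratic-in-distance lower bound that drives the whole argument.
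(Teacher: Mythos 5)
Your strategy for parts (i)–(iii) is essentially the paper's: Lemma~\ref{lem:localconv}(ii) supplies the small-strain $\lambda$-convexity with $\lambda>0$, the local-slope representation of Lemma~\ref{lem:repsmallstrain}(i) (not Lemma~\ref{lem:stronguppergradient3d}, which is the large-strain version) yields the variational inequality, which after absorption gives the \L ojasiewicz-type bound $\phi(y)-\phi(y_\infty^\delta)\le C\,|\partial\phi|_{\mathcal D}^2(y)$, and the chain rule $-\frac{d}{dt}\phi(y^\delta(t))=|\partial\phi|_{\mathcal D}^{\tilde p/(\tilde p-1)}(y^\delta(t))$ closes the ODE $g'\le -Cg^{s}$ with $s=\tilde p/(2\tilde p-2)$.

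However, there is a real gap in your treatment of part (iv), $\tilde p>2$. The $\lambda$-convex variational inequality coming from Lemma~\ref{lem:localconv}(ii) and the slope representation reads, in the paper's notation \eqref{ineq:slopecompare},
\begin{equation*}
\phi(y)-\phi(y_\infty^\delta)+\tfrac{\lambda}{2}\|\nabla y-\nabla y_\infty^\delta\|_{L^2(\Omega)}^2\;\le\;C\,\mathcal D(y,y_\infty^\delta)\,|\partial\phi|_{\mathcal D}(y),
\end{equation*}
with the quadratic coercivity term in $L^2$, \emph{not} in $\mathcal D^2$. You wrote it with $\tfrac{\lambda}{2}\mathcal D(y,y_\infty^\delta)^2$, which would allow the clean maximization over the distance, but this is only what Lemma~\ref{lem:localconv}(ii) actually gives when $\tilde p=2$. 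To absorb the right-hand side, one needs to pass from $\mathcal D(y,y_\infty^\delta)\sim\|\nabla y-\nabla y_\infty^\delta\|_{L^{\tilde p}(\Omega)}$ into the $L^2$-coercivity; this is possible by H\"older/Jensen when $\tilde p\le 2$, but when $\tilde p>2$ the $L^{\tilde p}$ norm cannot be dominated by the $L^2$ norm, so the Young absorption step fails and your claimed uniform PL inequality $|\partial\phi|_{\mathcal D}^2\ge 2\lambda(\phi-\phi_\infty)$ does not follow. The paper resolves this by invoking the Gagliardo--Nirenberg interpolation $\|\nabla(y-y_\infty^\delta)\|_{L^{\tilde p}}\lesssim\|\nabla^2(y-y_\infty^\delta)\|_{L^p}^\alpha\|\nabla(y-y_\infty^\delta)\|_{L^2}^{1-\alpha}+\|\nabla(y-y_\infty^\delta)\|_{L^2}$ (using the uniform $W^{2,p}$ bound on the trajectory), which produces a PL-type inequality of the form $\phi-\phi_\infty\le C|\partial\phi|_{\mathcal D}^{2/(1+\alpha)}$ and hence an exponent $s=\frac{\tilde p}{\tilde p-1}\cdot\frac{1+\alpha}{2}<1$. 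Finite-time extinction still follows, but \emph{not} with your exponent $s=\tilde p/(2\tilde p-2)$; as the paper records in Remark~\ref{remarknotsharp}, the achieved exponent is suboptimal compared to the heuristic one. So your conclusion for (iv) is qualitatively right, but the route you propose for obtaining the differential inequality does not go through, and the explicit extinction time $T_{\rm ext}\le g(0)^{1-s}/(C(1-s))$ with your value of $s$ is not justified.
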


Notice that we do not quantify the rate of convergence in (iv). Its proof leads to a rate which does not match the heuristic scaling. This is discussed in Remark~\ref{remarknotsharp}.

\section{Metric gradient flows}\label{sec: auxi-proofs}

\subsection{Definitions}

In this section, we  recall the relevant definitions about curves of maximal slope and present an abstract theorem concerning the convergence of time-discrete solutions to curves of maximal slope.   
We consider a   complete metric space $(\mathscr{S},\mathcal{D})$. We say a curve $y\colon (a,b) \to \mathscr{S}$ is \emph{absolutely continuous} with respect to $\mathcal{D}$ if there exists $m \in L^1(a,b)$ such that
\begin{align}\label{def:absolutecontinuity}
\mathcal{D}(y(s),y(t)) \le \int_s^t m(r) \,{\rm d}r    \ \ \   \text{for all} \ a \le s \le t \le b.
\end{align}
The smallest function $m$ with this property, denoted by $|y'|_{\mathcal{D}}$, is called \emph{metric derivative} of  $y$  and satisfies  for a.e.\ $t \in (a,b)$   (see \cite[Theorem 1.1.2]{AGS} for the existence proof)
\begin{align}\label{def:metricderivative}
|y'|_{\mathcal{D}}(t) := \lim_{s \to t} \frac{\mathcal{D}(y(s),y(t))}{|s-t|}.
\end{align}
Next, we  define the notion of a \emph{$\tilde p$-curve of maximal slope}. We only give the basic definition here and refer to \cite[Section 1.2, 1.3]{AGS} for motivations and more details.  By  $h^+:=\max(h,0)$ we denote the positive part of a function  $h$.

\begin{definition}[Upper gradients, slopes, curves of maximal slope]\label{main def2} 
	We consider a   complete metric space $(\mathscr{S},\mathcal{D})$ with a functional $\phi\colon \mathscr{S} \to (-\infty,+\infty]$.

	{\rm(i)} A function $g\colon \mathscr{S} \to [0,\infty]$ is called a strong upper gradient for $\phi$ if for every absolutely continuous curve $ y\colon  (a,b) \to \mathscr{S}$ the function $g \circ y$ is Borel and 
	$$|\phi(y(t)) - \phi(y(s))| \le \int_s^t g( y(r)) |y'|_{\mathcal{D}}(r)\,   {\rm d}r    \  \ \  \text{for all} \ a< s \le t < b.$$
	
	{\rm(ii)} For each $y \in \mathscr{S}$ the local slope of $\phi$ at $y$ is defined by 
	$$|\partial \phi|_{\mathcal{D}}(y): = \limsup_{z \to y} \frac{(\phi(y) - \phi(z))^+}{\mathcal{D}(y,z)}.$$

	{\rm(iii)} Let $\tilde p \in (1,+\infty)$. An absolutely continuous curve $y\colon (a,b) \to \mathscr{S}$ is called a ${\tilde p}$-curve of maximal slope for $\phi$ with respect to the strong upper gradient $g$ if for a.e.\ $t \in (a,b)$
	$$\frac{\rm d}{ {\rm d} t} \phi(y(t)) \le - \frac{1}{{\tilde p}}|y'|^{\tilde p}_{\mathcal{D}}(t) - \frac{1}{p'}g^{p'}(y(t))$$ for $p' = {\tilde p}/({\tilde p}-1)$.
\end{definition}

\subsection{Curves of maximal slope as limits of time-discrete solutions}
 
Consider a functional $\phi \colon \mathscr{S} \to   (-\infty,\infty]   $.
   We now    describe    the construction of time-discrete solutions for the energy $\phi$ and the metric $\mathcal{D}$.
Consider a fixed time step $\tau >0$ and suppose that an initial datum $Y^0_{\tau}$ is given. Whenever $Y_{\tau}^0, \ldots, Y^{n-1}_{\tau}$ are known, $Y^n_{\tau}$ is defined as (if existent)
\begin{align}\label{time-discretescheme}
Y_{\tau}^n = {\rm argmin}_{v \in \mathscr{S}} \mathbf{\Phi}(\tau,Y^{n-1}_{\tau}; v), \ \ \ \mathbf{\Phi}(\tau,u; v):=  \frac{1}{\tilde p \tau^{\tilde p- 1}} \mathcal{D}(v,u)^{\tilde p} + \phi(v). 
\end{align}
 Then,    we define the  piecewise constant interpolation by
\begin{align*}
\tilde{Y}_{\tau}(0) = Y^0_{\tau}, \ \ \ \tilde{Y}_{\tau}(t) = Y^n_{\tau}  \ \text{for} \ t \in ( (n-1)\tau,n\tau], \ n\ge 1.  
\end{align*}
We call  $\tilde{Y}_{\tau}$  a \emph{time-discrete solution}. 
   Our    goal is to study the limit of time-discrete solutions as $\tau \to 0$. Let  $\sigma$ denote the topology on $\mathscr{S}$  for the convergence satisfying the following conditions:   
we suppose     that      
\begin{align}\label{compatibility}
\begin{split}
z_k \stackrel{\sigma}{\to} z, &\ \  \bar{z}_k \stackrel{\sigma}{\to} \bar{z}  \ \ \  \Rightarrow \ \ \ \liminf_{k \to \infty} \mathcal{D}(z_k,\bar{z}_k) \ge  \mathcal{D}(z,\bar{z})
\end{split}
\end{align}
for all $z, \bar z \in \mathscr{S}$.
Moreover, we assume that for every sequence $(z_k)_k$, $z_k \in \mathscr{S}$, and $N \in \N$ we have
\begin{align}\label{basic assumptions2}
\begin{split}
\phi(z_k) \leq N \quad \Rightarrow \ \ \ z_k \stackrel{\sigma}{\to} z \in \mathscr{S} \quad \text{(up to a subsequence)}.
\end{split}
\end{align}
   Further,    we suppose lower semicontinuity of the energies and the slopes in the following sense: for all $z \in \mathscr{S}$ and $(z_k)_k$, $z_k \in \mathscr{S}$, we have
\begin{align}\label{eq: implication}
\begin{split}
z_k \stackrel{\sigma}{\to}  z \ \ \ \ \  \Rightarrow \ \ \ \ \  \liminf_{k \to \infty} |\partial \phi|_{\mathcal{D}} (z_{k}) \ge |\partial \phi|_{\mathcal{D}} (z), \ \ \ \ \  \liminf_{k \to \infty} \phi(z_{k}) \ge \phi(z).
\end{split}
\end{align}
   For the relation  of time-discrete solutions  and    curves of maximal slope we    will    use the following result.  
\begin{theorem}\label{th:abstract convergence 2}
	Suppose that   \eqref{compatibility}--\eqref{eq: implication} hold. Moreover, assume that    $|\partial \phi|_{\mathcal{D}}$ is a  strong upper gradient for $ \phi $.  Consider a  null sequence $(\tau_k)_k$. Let   $(Y^0_{\tau_k})_k$ with $Y^0_{\tau_k} \in \mathscr{S}$  and $z_0 \in \mathscr{S}$ be initial data satisfying 
	
	\begin{align*}
	& \ \  Y^0_{\tau_k} \stackrel{\sigma}{\to} z_0 , \ \ \ \ \  \phi(Y^0_{\tau_k}) \to \phi(z_0).
	\end{align*}
	Then, for each sequence of discrete solutions $(\tilde{Y}_{\tau_k})_k$  starting from $(Y^0_{\tau_k})_k$ there exists a limiting function $z\colon [0,+\infty) \to \mathscr{S}$ such that up to a  subsequence    (not relabeled)   
	$$\tilde{Y}_{\tau_k}(t) \stackrel{\sigma}{\to} z(t), \ \ \ \ \ \phi(\tilde{Y}_{ \tau_k}(t)) \to \phi(z(t)) \ \ \  \ \ \ \ \ \forall t \ge 0$$
	as $k \to \infty$, and $z$ is a ${\tilde p}$-curve of maximal slope for $\phi$ with respect to $|\partial \phi|_{\mathcal{D}}$. In particular,  $z$ satisfies the    energy-dissipation-balance      
	\begin{align}\label{maximalslope}
	\frac{1}{{\tilde p}} \int_0^T |z'|_{\mathcal{D}}^{\tilde p}(t) \, {\rm d} t + \frac{1}{p'} \int_0^T |\partial \phi|_{\mathcal{D}}^{p'}(z(t)) \, {\rm d} t + \phi(z(T)) = \phi(z_0) \ \  \ \ \ \forall \,  T>0,
	\end{align} 
	where $p' = {\tilde p}/({\tilde p}-1)$.
	
\end{theorem}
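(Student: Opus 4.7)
The plan is to follow the variational-interpolation strategy of \cite[Sec.~3]{AGS}. First I would extract from the minimizing movement scheme both a uniform energy bound and a discrete metric-speed bound. Concretely, testing the minimality of $Y^n_{\tau_k}$ in \eqref{time-discretescheme} against the competitor $Y^{n-1}_{\tau_k}$ yields
\begin{align*}
\phi(Y^n_{\tau_k}) + \frac{1}{\tilde p \tau_k^{\tilde p-1}} \mathcal{D}(Y^n_{\tau_k},Y^{n-1}_{\tau_k})^{\tilde p} \leq \phi(Y^{n-1}_{\tau_k}).
\end{align*}
Iterating gives $\phi(\tilde Y_{\tau_k}(t)) \leq \phi(Y^0_{\tau_k})$ and a uniform bound on $\sum_n \tau_k |Y'_{\tau_k}|_{\mathcal{D}}^{\tilde p}$, where $|Y'_{\tau_k}|_{\mathcal{D}}$ is the piecewise constant function taking the value $\mathcal{D}(Y^n_{\tau_k},Y^{n-1}_{\tau_k})/\tau_k$ on $((n-1)\tau_k,n\tau_k]$.

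Next I would introduce the De Giorgi variational interpolant: for $t \in ((n-1)\tau_k,n\tau_k]$ pick $\hat Y_{\tau_k}(t) \in \argmin_v \mathbf{\Phi}(t-(n-1)\tau_k, Y^{n-1}_{\tau_k}; v)$. Differentiating in time the minimum-value function $t\mapsto \min_v \mathbf{\Phi}(t, Y^{n-1}_{\tau_k};v)$ and using that the local slope bounds a difference-quotient along optimal rays (see \cite[Thm.~3.1.4]{AGS}), one obtains the sharp discrete slope estimate
\begin{align*}
|\partial \phi|_{\mathcal{D}}(\hat Y_{\tau_k}(t)) \leq \Big(\frac{\mathcal{D}(\hat Y_{\tau_k}(t), Y^{n-1}_{\tau_k})}{t-(n-1)\tau_k}\Big)^{\tilde p - 1}.
\end{align*}
Combining this with the one-step energy inequality above, a Young-splitting of the cross term, and telescoping in $n$, one derives the refined discrete energy-dissipation inequality
\begin{align*}
\phi(\tilde Y_{\tau_k}(t)) + \frac{1}{\tilde p}\int_0^t |Y'_{\tau_k}|_{\mathcal{D}}^{\tilde p}(r)\, dr + \frac{1}{p'}\int_0^t |\partial \phi|_{\mathcal{D}}^{p'}(\hat Y_{\tau_k}(r))\, dr \leq \phi(Y^0_{\tau_k})
\end{align*}
for $t = n\tau_k$.

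For the limit passage, the uniform energy bound together with \eqref{basic assumptions2} gives $\sigma$-precompactness of $(\tilde Y_{\tau_k}(t))_k$ at each fixed $t$. A diagonal extraction on a countable dense set $D\subset [0,+\infty)$ produces a limit $z\colon D\to \mathscr{S}$, which by \eqref{compatibility} together with the uniform $p$-action bound extends to an absolutely continuous curve on $[0,+\infty)$ and upgrades the convergence to $\tilde Y_{\tau_k}(t) \stackrel{\sigma}{\to} z(t)$ for all $t\geq 0$; a further comparison shows $\mathcal{D}(\hat Y_{\tau_k}(t), \tilde Y_{\tau_k}(t))\to 0$ pointwise a.e., so that $\hat Y_{\tau_k}(t)\stackrel{\sigma}{\to} z(t)$ as well. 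Invoking \eqref{eq: implication} to take $\liminf$ on the slope and energy terms and Fatou's lemma on the metric-derivative term, the discrete inequality from the previous paragraph passes to
\begin{align*}
\phi(z(T)) + \frac{1}{\tilde p}\int_0^T |z'|_{\mathcal{D}}^{\tilde p}(t)\, dt + \frac{1}{p'}\int_0^T |\partial \phi|_{\mathcal{D}}^{p'}(z(t))\, dt \leq \phi(z_0).
\end{align*}

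Finally, since $|\partial \phi|_{\mathcal{D}}$ is a strong upper gradient and $z$ is absolutely continuous, Young's inequality gives for all $0\leq s<t$
\begin{align*}
\phi(z(s))-\phi(z(t)) \leq \int_s^t |\partial \phi|_{\mathcal{D}}(z(r))\, |z'|_{\mathcal{D}}(r)\, dr \leq \frac{1}{\tilde p}\int_s^t |z'|_{\mathcal{D}}^{\tilde p}(r)\, dr + \frac{1}{p'}\int_s^t |\partial \phi|_{\mathcal{D}}^{p'}(z(r))\, dr.
\end{align*}
Comparing this reverse estimate with the previous displayed inequality on $[0,T]$ and on arbitrary subintervals $[s,t]$ forces equality throughout, which both yields \eqref{maximalslope} and implies the differential inequality of Definition~\ref{main def2}(iii), identifying $z$ as a $\tilde p$-curve of maximal slope; a sandwich between $\liminf$ and $\limsup$ then also produces $\phi(\tilde Y_{\tau_k}(t)) \to \phi(z(t))$. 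I expect the variational-interpolation step to be the main technical obstacle: the sharp discrete slope bound for $\hat Y_{\tau_k}$ in the purely metric setting with a general exponent $\tilde p\in (1,+\infty)$ relies on a careful differentiation of the Moreau--Yosida value in its time parameter, and is the only ingredient that does not follow by routine manipulations.
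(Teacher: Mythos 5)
Your proposal is correct and follows essentially the same route as the paper: the paper's proof is simply the citation of \cite[Theorem~2.3.3]{AGS}, and what you have written is a faithful transcription of the De Giorgi variational-interpolation argument that establishes that theorem (one-step minimality estimate, variational interpolant $\hat Y_{\tau_k}$ with the sharp slope bound $|\partial\phi|_{\mathcal{D}}(\hat Y_{\tau_k}(t))\le(\mathcal{D}(\hat Y_{\tau_k}(t),Y^{n-1}_{\tau_k})/(t-(n-1)\tau_k))^{\tilde p-1}$, refined discrete energy-dissipation inequality, diagonal compactness plus lower semicontinuity via \eqref{compatibility}--\eqref{eq: implication} and Fatou, and closure via Young's inequality and the strong upper gradient assumption).
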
 

\begin{proof}
The statement is proved in  \cite[Theorem~2.3.3]{AGS}.
\end{proof}
 

\section{Properties of the model} \label{sec:3dproperties} 
First, we state properties of functions lying in the space $\mathscr{S}_{M}$.
\begin{lemma}[Bounds in $\mathscr{S}_{M}$]
Let $M>0$. There exists a constant $C= C(M, \Omega)>0$ such that for all $y \in \mathscr{S}_{M}$ it holds that
\begin{align}\label{aprioribounds}
\Vert y \Vert_{W^{2,p}(\Omega) } + \Vert y \Vert_{C^{1,1-d/p}(\Omega)} + \Vert (\nabla y)^{-1} \Vert_{C^{1-d/p}(\Omega) }\leq \REV C , \END \qquad \inf_{x \in \bar \Omega} \det ( \nabla y(x) ) \geq \REV C^{-1}. \END
\end{align}
\end{lemma}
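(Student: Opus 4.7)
The plan is to derive the four stated bounds in sequence: first a global $W^{2,p}$ estimate from the coercivity of $P$ and a Poincaré-type argument, then the $C^{1,1-d/p}$ bound from Morrey's embedding, next the pointwise lower bound on $\det \nabla y$ via a Healey--Kr\"omer-type argument, and finally the bound on $(\nabla y)^{-1}$ from the cofactor formula.

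For the first step I would set $u = y - \hat y \in W^{2,p}(\Omega;\R^d)$, which has zero trace on $\partial\Omega$. Assumption \ref{H_bounds} together with the lower bound on $W$ in \ref{W_lower_bound} and $\phi(y) \leq M$ controls $\|\nabla^2 y\|_{L^p}^p$ up to the term $\|f\|_{L^\infty}\|y\|_{L^1}$. To handle this force term I exploit that $u \in W^{1,p}_0(\Omega;\R^d)$: Poincaré combined with a standard compactness/contradiction argument yields $\|u\|_{W^{2,p}} \leq C\|\nabla^2 u\|_{L^p}$, so $\|y\|_{L^1}$ is controlled by $\|\nabla^2 y\|_{L^p} + \|\hat y\|_{W^{2,p}}$. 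Young's inequality then lets me absorb the force contribution and conclude $\|y\|_{W^{2,p}} \leq C(M, f, \hat y, \Omega)$. Since $p>d$, Morrey's embedding $W^{2,p}(\Omega) \hookrightarrow C^{1,1-d/p}(\overline\Omega)$ immediately yields the bound on $\|y\|_{C^{1,1-d/p}}$.

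The main difficulty is the pointwise lower bound on $\det \nabla y$. The determinant term in \ref{W_lower_bound} gives $\int_\Omega \det(\nabla y)^{-q} \,\di x \leq C$ uniformly for $y \in \mathscr{S}_{M}$, while the previous step ensures that $\nabla y$, and hence $\det \nabla y$, is uniformly Hölder on $\overline\Omega$ with exponent $\alpha = 1 - d/p$. I would then invoke the standard argument of {\sc Healey and Kr\"omer}~\cite{HealeyKroemer:09}: if $\det \nabla y(x_0) = \epsilon$ were arbitrarily small at some $x_0 \in \overline\Omega$, the Hölder bound would force $\det \nabla y \leq 2\epsilon$ on a ball $B(x_0,r) \cap \Omega$ with $r$ comparable to $\epsilon^{1/\alpha}$. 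Integrating $\det(\nabla y)^{-q}$ over this ball yields $\epsilon^{d/\alpha - q} \lesssim 1$, which rules out $\epsilon \to 0$ precisely at the threshold exponent $q \geq pd/(p-d)$ in \ref{W_lower_bound}.

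Finally, the uniform bound on $(\nabla y)^{-1}$ follows from the cofactor formula $(\nabla y)^{-1} = \operatorname{cof}(\nabla y)^{T}/\det \nabla y$: both $\operatorname{cof}(\nabla y)$ and $\det \nabla y$ are polynomials in the entries of $\nabla y$, hence uniformly in $C^{1-d/p}$, while the reciprocal of a Hölder function bounded away from zero is Hölder with the same exponent; consequently $(\nabla y)^{-1}$ inherits the $C^{1-d/p}$ estimate. The Healey--Kr\"omer step is the crux of the argument: it is the only place where the interplay between the integral bound $\int_\Omega \det(\nabla y)^{-q} \, \di x \leq C$ and the Hölder modulus of $\nabla y$ is exploited to upgrade integral information into the pointwise non-interpenetration estimate.
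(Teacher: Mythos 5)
The paper's own ``proof'' is a one-line citation to \cite[Theorem~3.1]{MielkeRoubicek}, which in turn rests on \cite{HealeyKroemer:09}. Your proposal reconstructs that argument in full and correctly: the $W^{2,p}$ bound from the coercivity of $P$ together with a second-order Poincar\'e inequality for $u = y - \hat y$ (zero trace plus a compactness/contradiction argument), the H\"older bound via the Morrey embedding $W^{2,p}\hookrightarrow C^{1,1-d/p}$ for $p>d$, the pointwise lower bound on $\det\nabla y$ via the Healey--Kr\"omer covering argument, and the control of $(\nabla y)^{-1}$ from the cofactor formula. This is precisely the chain of reasoning the cited references carry out, so your approach is not genuinely different from the paper's --- you have simply unpacked the citation, which is useful.

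One small imprecision worth flagging in the Healey--Kr\"omer step: with $\alpha = 1-d/p$ you obtain $\epsilon^{d/\alpha - q}\lesssim 1$, and since $d/\alpha = \frac{pd}{p-d}$, this forces $\epsilon$ bounded away from zero only when $q > \frac{pd}{p-d}$ strictly. At the borderline $q = \frac{pd}{p-d}$, the exponent $d/\alpha - q$ vanishes and the inequality is vacuous, so the naive covering estimate does not ``rule out $\epsilon\to 0$'' there; you would need a finer argument (or, more likely, the hypothesis in \ref{W_lower_bound} should read $q > \frac{pd}{p-d}$, which is the condition in the original Healey--Kr\"omer theorem). Since the paper also writes $\geq$, this seems to be an imprecision inherited from the statement rather than a gap you introduced, but your phrase ``precisely at the threshold'' overclaims what the one-ball estimate delivers.
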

\begin{proof}
The proof can be found in \cite[Theorem 3.1]{MielkeRoubicek} and relies on \cite{HealeyKroemer:09}. 
\end{proof}

We now provide the rigidity estimate that gives a lower bound on the metric in Lemma~\ref{th: metric space}. This inequality is the crucial ingredient for the existence of a curve of maximal slope for large strains and has not been applied yet in the context of viscoelasticity.
\begin{lemma}[Rigidity estimate]\label{nonlinearKornappl}
 For any $y_0$, $y_1 \in \mathscr{S}_{M}$ there exists a constant $C>0$, only depending on $M>0$ and $\Omega$ such that 
\begin{align}\label{Kornineq}
\Vert \nabla y_1 - \nabla y_0 \Vert_{L^{\tilde p} (\Omega) } \leq C  \Vert (\nabla y_1)^T \nabla y_1 - (\nabla y_0)^T \nabla y_0 \Vert_{L^{\tilde p} (\Omega) }.
\end{align}
\end{lemma}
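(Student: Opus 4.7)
The plan is to invoke the Ciarlet--Mardare rigidity estimate \cite{CiarletMardare} (which quantifies how close two deformations must be, up to a rigid motion, in terms of their right Cauchy--Green tensors) and then use the Dirichlet boundary condition shared by $y_0$ and $y_1$ to eliminate the rigid-motion freedom. Concretely, the Ciarlet--Mardare theorem yields $R \in SO(d)$ and $b \in \R^d$ (depending on $y_0$ and $y_1$) such that
\begin{align*}
\Vert y_1 - R y_0 - b \Vert_{W^{1,\tilde p}(\Omega;\R^d)} \leq C \Vert (\nabla y_1)^T \nabla y_1 - (\nabla y_0)^T \nabla y_0 \Vert_{L^{\tilde p}(\Omega)},
\end{align*}
where $C$ depends only on the uniform $W^{2,p}$-bound and on the uniform positive lower bound for $\det \nabla y_i$ provided by \eqref{aprioribounds}, and therefore only on $M$ and $\Omega$.

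To remove $R$ and $b$, I would take traces. Since $y_0 = y_1 = \hat y$ on $\partial \Omega$, we have $(y_1 - R y_0 - b)\vert_{\partial \Omega} = (I - R)\hat y - b$, and the trace theorem gives
\begin{align*}
\Vert (I - R)\hat y - b \Vert_{L^{\tilde p}(\partial \Omega;\R^d)} \leq C \Vert (\nabla y_1)^T \nabla y_1 - (\nabla y_0)^T \nabla y_0 \Vert_{L^{\tilde p}(\Omega)}.
\end{align*}
The linear map $(A,c) \in \R^{d\times d} \times \R^d \mapsto A\hat y\vert_{\partial \Omega} - c \in L^{\tilde p}(\partial \Omega;\R^d)$ has trivial kernel on the physically relevant subset (owing to $\det \nabla \hat y > 0$, which prevents $\hat y(\partial \Omega)$ from lying in a proper affine subspace of $\R^d$). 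Hence a finite-dimensional inverse estimate yields $|I - R| + |b| \leq C' \Vert (I - R)\hat y - b \Vert_{L^{\tilde p}(\partial \Omega)}$, and consequently $|I - R| \leq C \Vert C_1 - C_0 \Vert_{L^{\tilde p}(\Omega)}$.

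The lemma then follows by a triangle-inequality decomposition and the uniform bound $\Vert \nabla y_0 \Vert_{L^{\tilde p}} \leq C(M)$ from \eqref{aprioribounds}:
\begin{align*}
\Vert \nabla y_1 - \nabla y_0 \Vert_{L^{\tilde p}} \leq \Vert \nabla y_1 - R \nabla y_0 \Vert_{L^{\tilde p}} + |R - I| \, \Vert \nabla y_0 \Vert_{L^{\tilde p}} \leq C \Vert C_1 - C_0 \Vert_{L^{\tilde p}}.
\end{align*}

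The main obstacle will be the quantitative finite-dimensional inverse estimate for $(A, c) \mapsto A \hat y - c$ on $L^{\tilde p}(\partial \Omega)$, together with the verification that all constants remain uniform over $y_0, y_1 \in \mathscr{S}_M$ via \eqref{aprioribounds}. Should the formulation in \cite{CiarletMardare} incorporate a gauge-fixing that bakes in the clamped boundary condition, the rotation step becomes unnecessary and the argument collapses to a direct citation; otherwise the rotation-elimination step above is the heart of the matter.
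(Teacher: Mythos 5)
The central gap in your proposal is that you assert, without justification, that the Ciarlet--Mardare constant ``depends only on the uniform $W^{2,p}$-bound and on the uniform positive lower bound for $\det \nabla y_i$\ldots and therefore only on $M$ and $\Omega$.'' That assertion \emph{is} the lemma: the theorem you cite gives a constant $C(y_0)$ that depends on the specific reference deformation $y_0$, not merely on a priori bounds for it, so uniformity over $\mathscr{S}_M$ must be proved, not assumed. The paper does exactly this: using \eqref{aprioribounds}, it shows that every $y_0 \in \mathscr{S}_M$ lies in a set that is compact in $C^1(\overline{\Omega};\R^d)$, verifies that $y_0 \mapsto C(y_0)$ is continuous for that topology (by estimating how the two $L^{\tilde p}$-norms in the inequality change when $\nabla y_0$ is perturbed in $L^\infty$), and then invokes Weierstra\ss{} to extract a uniform constant. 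Without some such compactness/continuity argument your proof does not establish the uniform dependence on $M$ and $\Omega$ that the statement requires.

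A secondary point: the version of \cite[Theorem~3]{CiarletMardare} the paper uses already incorporates the clamped boundary condition (which $y_0,y_1 \in \mathscr{S}_M$ share, since both equal $\hat y$ on $\partial\Omega$), so the inequality comes directly without any rigid-motion freedom. Your trace-plus-finite-dimensional-inverse argument to eliminate $(R,b)$ is a reasonable route if one only had the gauge-free version, and you correctly flagged that it might be unnecessary; but it neither helps with nor replaces the missing uniformity step, which is where the real work of this lemma lies.
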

Lemma~\ref{nonlinearKornappl} is essentially addressed in \cite[Theorem~3]{CiarletMardare}, and a generalization of the rigidity estimate by {\sc Friesecke, James and Müller} \cite{FrieseckeJamesMueller:02}.
The crucial difference to \cite{CiarletMardare} is that the constant $C>0$ depends on $y_0$, but can be chosen uniformly among functions in the space $ \mathscr{S}_{M}$. This has already been exploited in the case of the generalized version of Korn's inequality in \cite[Theorem~3.3]{MielkeRoubicek}.

\begin{proof} 
	Due to \eqref{aprioribounds}, we can use \cite[Theorem~3]{CiarletMardare}, yielding
	\begin{align}\label{Kornineqwithbadconstant}
	\Vert \nabla y_1 - \nabla y_0 \Vert_{L^{\tilde p} (\Omega) } \leq C  \Vert (\nabla y_1)^T \nabla y_1 - (\nabla y_0)^T \nabla y_0 \Vert_{L^{\tilde p} (\Omega) },
	\end{align}
	where $C= C(y_0)>0$ depends on $y_0$.
	The only thing left to show is that the constant can be chosen uniformly among functions in the set
	\begin{align*}
	W_{{ \REV \tilde C \END }} \defas \{ y_0 \in C^{1,1-d/p}(\Omega; \R^d) : \min\limits_{x \in \Omega} \det (\nabla y_0) \geq { \REV \tilde C \END }^{-1}, \ \Vert y_0 \Vert_{C^{1,1- d/p}(\Omega;\R^d)} \leq { \REV \tilde C \END } \}.
	\end{align*}
	Due to Arzelà-Ascoli, $W_{{ \REV \tilde C \END }}$ is a compact subset  of the space $\big( C^1(\overline{\Omega};\R^d), \Vert \cdot \Vert_{L^\infty(\Omega)} + \Vert\nabla (\cdot) \Vert_{L^\infty(\Omega)} \big)$. Next, we show that $y_0 \mapsto C(y_0)$ with $C(y_0)$ as in \eqref{Kornineqwithbadconstant} is a continuous mapping  on \linebreak $ \big( C^1(\overline{\Omega};\R^d), \Vert \cdot \Vert_{L^\infty(\Omega)} + \Vert\nabla (\cdot) \Vert_{L^\infty(\Omega)} \big)$. Indeed, if $\Vert \nabla \hat y_0 - \nabla \tilde y_0 \Vert_{L^\infty(\Omega)} < \eps$, we have
	\begin{align*}
	\vert \Vert \nabla y_1 - \nabla \tilde y_0 \Vert_{L^{\tilde p}(\Omega)} -  \Vert \nabla y_1 - \nabla \hat y_0 \Vert_{L^{\tilde p}(\Omega)} \vert \le \vert \Omega \vert^{1/{\tilde p}} \eps,
	\end{align*}
	as well as
	\begin{align*}
	&\vert \Vert (\nabla y_1)^T \nabla y_1 - (\nabla \tilde y_0)^T \nabla \tilde y_0 \Vert_{L^{\tilde p}(\Omega)} -  \Vert (\nabla y_1)^T \nabla y_1 - (\nabla \hat y_0)^T \nabla \hat y_0\Vert_{L^{\tilde p}(\Omega)} \vert  \\ &\leq  \Vert  (\nabla \tilde y_0)^T \nabla \tilde y_0 - (\nabla \hat y_0)^T \nabla \hat y_0  \Vert_{L^{\tilde p}(\Omega)}  \\
	 & =  \Vert  (\nabla \tilde y_0 - \nabla \hat y_0)^T \nabla \tilde y_0 - (\nabla \hat y_0)^T ( \nabla \hat y_0 - \nabla \tilde y_0 ) \Vert_{L^{\tilde p}(\Omega)}  \\
	 & \leq 2 { \REV \tilde C \END } \vert \Omega \vert^{1/{\tilde p}} \eps.
	\end{align*}
	Thus, Weierstraß' extremum principle implies that there exists a maximal constant $C(y_0^*)$ such that \eqref{Kornineqwithbadconstant} holds uniformly among functions in  $W_{{ \REV \tilde C \END }}$. In view of \eqref{aprioribounds}, we can conclude the proof.
	\end{proof}

	\begin{rem}[Nonlinear Korn's inequality]
		In the spirit of Lemma~\ref{nonlinearKornappl} and \cite[Theorem~3.3]{MielkeRoubicek},  \cite[Corollary 4.1]{Pompe} implies that there exists a constant $C>0$, only depending on $M>0$ and $\Omega$ such that  for any $y \in \mathscr{S}_{M}$ and $u \in W_0^{1,\tilde p}(\Omega;\R^d)$ it holds that	 
		   \begin{align}\label{Kornineqlinear}
		   \Vert \nabla u \Vert_{L^{\tilde p} (\Omega) } \leq C  \Vert (\nabla u)^T \nabla y + (\nabla y)^T \nabla u \Vert_{L^{\tilde p} (\Omega) }.
		   \end{align}
		  
	\end{rem}

    The   following lemma provides    properties    about the topology and lower semicontinuity. In addition, the lemma    will help    to prove lower    semicontinuity    of the slopes in    Lemma~\ref{lem:stronguppergradient3d} and Lemma~\ref{lem:repsmallstrain}.

\begin{lemma}[Properties of ($\mathscr{S}_{M}, \mathcal{D}$),  $\mathcal{D}$, and $\phi$]\label{th: metric space}
Let $M>0$. Then, we have
	\begin{itemize}
		\item[(i)] {\rm Completeness:} $(\mathscr{S}_{M}, \mathcal{D})$ is a complete metric space.
		\item[(ii)] {\rm Compactness:} If $(y_n)_n \subset \mathscr{S}_{M}$, then $(y_n)_n$ admits a subsequence converging weakly in \linebreak $W^{2,p}(\Omega;\R^d)$ and strongly in  $W^{1,\infty}(\Omega;\R^d)$.
		\item[(iii)] {\rm Topologies:} The topology on $\mathscr{S}_{M}$, induced by $\mathcal{D}$, coincides with the strong $W^{1,\infty}(\Omega;\R^d)$-topology, the strong $W^{1,\tilde p}(\Omega;\R^d)$-topology, and there exist $c, C>0$ depending on $M$ such that
\begin{align}\label{equivalence}
c \Vert \nabla y - \nabla \tilde y \Vert_{L^{\tilde p}(\Omega) } \leq \mathcal{D} (y, \tilde y) \leq C \Vert \nabla y - \nabla \tilde y \Vert_{L^{\tilde p}(\Omega) }
\end{align}
for all $y, \tilde y \in\mathscr{S}_{M}$.
		\item[(iv)] {\rm Lower semicontinuity of $\phi$:}    Let $(y_n)_n \subset \mathscr{S}_{M}$ be a sequence such that $\mathcal{D}(y_n,y) \to 0$ for some $y\in \mathscr{S}_{M}$. Then,   $\liminf_{n \to \infty} \phi(y_n) \ge \phi(y)$.   
		
		\item[(v)]  {\rm Continuity of ${\mathcal{D}}$:}  For sequences $(y_n)_n, \, (\tilde y_n)_n \subset \mathscr{S}_{M}$, with  $y_n \rightharpoonup y_0$ and $\tilde y_n \rightharpoonup \tilde y_0$ in $W^{2,p}(\Omega)$ we have
		
	$$\lim\limits_{n \to +\infty} \mathcal{D}(y_n,\tilde y_n) = {\mathcal{D}}  (y_0,\tilde y_0 ).$$ 
	 
	\end{itemize}

\end{lemma}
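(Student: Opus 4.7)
I would prove the five items in the order (ii), (iii), (iv), (v), (i), since each depends on the earlier ones. Item (ii) is direct: the a priori bounds \eqref{aprioribounds} yield a uniform $W^{2,p}$-bound on any sequence $(y_n)_n \subset \mathscr{S}_M$, so reflexivity gives weak $W^{2,p}$-convergence along a subsequence, and the compact embedding $W^{2,p}(\Omega;\R^d) \hookrightarrow W^{1,\infty}(\Omega;\R^d)$, valid since $p>d$, upgrades this to strong $W^{1,\infty}$-convergence. For (iii), the upper estimate in \eqref{equivalence} follows from \ref{D_bound}, the factorization
\begin{equation*}
F_1^T F_1 - F_2^T F_2 = (F_1 - F_2)^T F_1 + F_2^T (F_1 - F_2),
\end{equation*}
and the uniform $L^\infty$-bound on $\nabla y$ in \eqref{aprioribounds}; the lower estimate is precisely \ref{D_bound} combined with the rigidity estimate in Lemma~\ref{nonlinearKornappl}. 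Coincidence of the three topologies then follows, since any $\mathcal{D}$-convergent sequence is $W^{1,\tilde p}$-convergent and $W^{2,p}$-bounded, so (ii) provides $W^{1,\infty}$-convergence with the same limit.

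For (iv), I would split $\phi$ into its three terms. From $\mathcal{D}(y_n,y)\to 0$ and (iii), $\nabla y_n \to \nabla y$ uniformly; combined with $\det\nabla y_n \geq C_0^{-1}$ from \eqref{aprioribounds}, the gradients stay in a compact subset of $GL_+(d)$, so by \ref{W_regularity} the integrands $W(x,\nabla y_n(x))$ converge uniformly to $W(x,\nabla y(x))$ and their integrals pass to the limit. Extracting a $W^{2,p}$-weakly convergent subsequence whose limit must be $y$ (by uniqueness of the uniform limit), convexity \ref{H_regularity} yields lower semicontinuity of the $P$-term, which transfers to the whole sequence via the Urysohn subsequence principle; the load term is continuous by uniform convergence. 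For (v), the compact embedding applied to both weakly $W^{2,p}$-convergent sequences yields uniform convergence $\nabla y_n\to\nabla y_0$ and $\nabla \tilde y_n\to\nabla\tilde y_0$, so the integrand $D(x,\nabla y_n,\nabla\tilde y_n)^{\tilde p}$ converges pointwise and is uniformly bounded, and dominated convergence concludes.

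Finally, for (i), symmetry and the triangle inequality follow from Minkowski's inequality applied to the formula $D(x,F_1,F_2)=|A(x)(F_1^T F_1 - F_2^T F_2)|$; definiteness follows since $\mathcal{D}(y_1,y_2)=0$ forces $(\nabla y_1)^T \nabla y_1 = (\nabla y_2)^T \nabla y_2$ a.e.\ by \ref{D_bound}, hence $\nabla y_1=\nabla y_2$ a.e.\ by Lemma~\ref{nonlinearKornappl}, and the shared Dirichlet datum $\hat y$ then gives $y_1=y_2$. For completeness, a $\mathcal{D}$-Cauchy sequence $(y_n)_n$ is $W^{1,\tilde p}$-Cauchy by (iii), and (ii) extracts some $y_\infty$ with weak $W^{2,p}$- and strong $W^{1,\infty}$-convergence along a subsequence; the Dirichlet condition survives under uniform convergence, (iv) gives $\phi(y_\infty)\leq M$, hence $y_\infty\in\mathscr{S}_M$, and (v) yields $\mathcal{D}(y_n,y_\infty)\to 0$ for the full sequence by the Cauchy property. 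The main obstacle is item (iii), and through it (i): without the rigidity estimate of Lemma~\ref{nonlinearKornappl} one would only control the Cauchy--Green tensors, so $\mathcal{D}$ would fail to be definite and completeness could not be established.
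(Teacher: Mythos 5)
Your proposal is correct and takes essentially the same approach as the paper; the paper's own proof is much terser, deriving (iii) from Lemma~\ref{nonlinearKornappl}, \ref{D_bound}, \eqref{aprioribounds}, and \eqref{linearizethemetric}, and delegating (ii), (iv), and completeness to \cite[Lemma 4.5]{MFMK}, which you have in effect reproduced in detail. Your observation that the rigidity estimate is what makes $\mathcal{D}$ definite — and hence completeness possible — matches the role the paper assigns to Lemma~\ref{nonlinearKornappl} together with \ref{D_bound} and Poincaré.
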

Before proving the lemma, we observe
that for any $F_0, F_1 \in \R^{d \times d}$ it holds that
\begin{align} \label{linearizethemetric}
& \quad \left(F_1-F_0\right)^{T} F_0+F_0^{T}\left(F_1-F_0\right)-\left(F_1^{T} F_1-F_0^{T} F_0\right)  =-\left(F_1^{T}-F_0^{T}\right)\left(F_1-F_0\right)  .
\end{align}
  \begin{proof}
   Concerning (i), the positivity follows by Lemma~\ref{nonlinearKornappl}, \ref{D_bound}, and Poincaré's inequality. The other conditions of a metric space follow directly from the specific choice of $\mathcal{D}$. \REV (ii) and \END the completeness of  $(\mathscr{S}_{M}, \mathcal{D})$ now follow as in \cite[Lemma 4.5]{MFMK}, while we prove (iii), \REV  (iv), and (v) \END in detail:      By Lemma~\ref{nonlinearKornappl} and \ref{D_bound}   we derive the first inequality  in \eqref{equivalence}. The second inequality follows from \ref{D_bound}, \eqref{aprioribounds}, and \eqref{linearizethemetric}. This particularly shows that the topology induced by $\mathcal{D}$ is equivalent to the $W^{1,\tilde p}(\Omega)$-topology. Let $(y_n)_n \subset \mathscr{S}_{M}$ be a sequence converging strongly in $W^{1,\tilde p}(\Omega;\R^d)$. Then, (ii) implies that the sequence also converges strongly in $W^{1,\infty}(\Omega;\R^d)$. Conversely, strong convergence in $W^{1,\infty}(\Omega;\R^d)$ yields strong convergence in $W^{1,\tilde p}(\Omega)$, concluding the proof of (iii).
   \REV We now prove (iv) and consider a sequence $(y_n)_n \subset \mathscr{S}_{M}$ such that $\mathcal{D}(y_n,y) \to 0$ for some $y\in \mathscr{S}_{M}$.  
   Using (ii), we can assume that $(y_n)_n $ converges weakly in $W^{2,p}(\Omega;\R^d)$ and strongly in $W^{1,\infty}(\Omega;\R^d)$. Thus, \ref{H_regularity} implies that the second term of $\phi$ as defined in \eqref{energyphieps} is lower semicontinuous. 
   Since the set $ \{ F \in \R^{d \times d} : \det (F) \geq {  \tilde C  }^{-1}, \ \vert F \vert \leq { \REV \tilde C  } \}$ is a compact subset of $GL^+(d)$, \eqref{aprioribounds}, \ref{W_regularity}, strong convergence in $W^{1,\infty}(\Omega;\R^d)$, and the dominated convergence theorem yield the convergence of the remaining parts of $\phi$. 
   Recalling \eqref{explicitchoiceofD}, one can proceed similarly in (v) as strong convergence in $W^{1,\infty}(\Omega;\R^d)$ implies that the integrand of $\mathcal{D}^{\tilde p}$ \END converges uniformly. 
\end{proof}
\REV Notice that the previous proof shows that no quasiconvexity of $W$ needs to be assumed due to the second gradient. \END

Proving existence of curves of maximal slopes \REV   requires the local slope to be lower semicontinuous. \END  Therefore, one usually resorts to specific convexity properties of \REV the energy and the distance. \END To prove these properties, we rely on the specific structure of $\mathcal{D}$ and recall that for any $\tilde p \in (1,+\infty)$ and $a\geq 0 , b \geq 0 $, we have
\begin{align}\label{inequalitytildep}
(a+b)^{\tilde p} \leq a^{\tilde p} + b^{\tilde p} +C( a^{\tilde p-1} b + b^{\tilde p-1} a ),
\end{align}
where the constant $C>0$ only depends on $\tilde p$, see e.g.\ \cite[Theorem~1]{Jameson}.
The following lemma specifies these convexity properties.
\begin{lemma}[Generalized convexity]\label{lem:localconv}
Let  $M>0$ and  $\tilde p \in \REVB (1,+\infty) \END $. 
\begin{itemize}

\item[(i)]   Then, there exists $\lambda<0$ and $\kappa>0$ such that for all $y_0$, $y_1 \in \mathscr{S}_{M}$ with $\Vert \nabla y_1 - \nabla y_0 \Vert_{L^\infty(\Omega)} \leq \kappa$ we have 
\begin{align}\label{conv:phi}
\phi (y_s) \leq (1-s) \phi (y_0) + s \phi(y_1) - \frac{1}{2} \lambda s (1-s) \REVB \Vert \nabla y_1 - \nabla y_0 \Vert_{L^2(\Omega)}^2 , \END
\end{align}
where $y_s \defas (1-s) y_0 + sy_1$. 

\item[(ii)] 
\REV Assume that \ref{Sadditional} holds. \END
 Then, there exists some $\delta'>0$ and some $\lambda>0$ such that for all $0<\delta<\delta'$  the functional $\phi$ is strictly convex on $\mathscr{S}_{M}$ and satisfies the estimate 
\begin{align}\label{conv:phi2}
\phi (y_s)  & \leq  (1-s) \phi (y_0) + s \phi(y_1) - \frac{1}{2} \lambda s (1-s) \REV \Vert \nabla y_1 - \nabla y_0 \Vert_{L^2(\Omega)}^2 , \END
\end{align} 
where $y_s \defas (1-s) y_0 + sy_1$.

\item[(iii)]  Then, there exists a constant $C>0$ such that for all $y_0$, $y_1 \in \mathscr{S}_{M}$  we have
\begin{align}\label{conv:D}
\mathcal{D} (y_s, y_0)^{\tilde p} \leq s^{\tilde p} \mathcal{D} (y_1,y_0)^{\tilde p} \big( 1+ C \Vert \nabla y_1 - \nabla y_0 \Vert_{L^\infty(\Omega) }^{\tilde p-1} + C \Vert \nabla y_1 - \nabla y_0 \Vert_{L^\infty(\Omega) } \big).
\end{align} 
 \end{itemize}

\end{lemma}
Notice that there might be better paths than convex combinations which allow for sharper estimates in \eqref{conv:D}. However, as \eqref{conv:phi} and \eqref{conv:phi2} rely on Taylor expansions, the use of convex combinations is convenient in our setting.
Moreover, we highlight that $\lambda$ in (ii) is positive. This is particularly relevant for proving decay estimates as the time $t$ tends to $+\infty$, see Section~\ref{sec:longtime}. 

\begin{proof}
 The proof of \eqref{conv:phi} relies on \cite[Proposition 3.2]{MielkeRoubicek}. The force term in \eqref{energyphieps} is linear and therefore convex. Due to the convexity of $H$, see \ref{H_regularity}, we only need to address the term $\int_\Omega W(x, \nabla y(x) ) \di x$.  Let $\kappa>0$ so small such that $\inf\limits_{x \in \bar \Omega} \det (\nabla y_s (x) ) \geq { \REV \tilde C \END }^{-1}/2$ for $y_0$, $y_1 \in \mathscr{S}_{M}$ with $\Vert \nabla y_1 - \nabla y_0 \Vert_{L^\infty(\Omega)} \leq \kappa$, where $y_s$ is defined in the statement and ${ \REV \tilde C \END }$ denotes the constant from \eqref{aprioribounds}. A Taylor expansion yields
\begin{align*}
\int_\Omega W(x, \nabla y_i(x) ) \di x &= \int_\Omega W(x, \nabla y_s(x) ) + \partial_F W(x, \nabla y_s(x) ) : (\nabla y_i - \nabla y_s) \di x \\
&\geq - C \Vert \nabla y_i - \nabla y_s \Vert_{L^2(\Omega) }^2
\end{align*}
for a constant $C>0$, only depending on $M$, as $\partial_{F^2}^2 W(x,F)$ attains its maximum on the set $\overline \Omega \times \{ F \in \REV GL^+(d) \END : \vert F \vert \leq { \REV \tilde C \END }, \ \det (F) \geq { \REV \tilde C \END }^{-1}/2 \}$ due to the regularity of $W$, see \ref{W_regularity}. We now multiply the previous inequality with $(1-s)$ for $i = 0$ and with $s$ for $i = 1$ and sum those \REV inequalities \END up, to get that
\begin{align*}
&\int_\Omega (1-s)W(x, \nabla y_0(x) ) +\int_\Omega s W(x, \nabla y_1(x) ) -  W(x, \nabla y_s(x) ) \\
&\geq  - C (1-s) \Vert \nabla y_0 - \nabla y_s \Vert_{L^2(\Omega) }^2 - C s \Vert \nabla y_1 - \nabla y_s \Vert_{L^2(\Omega) }^2.
\end{align*}
\REVB Using the expansion \END
\begin{align*}
(1-s) \Vert \nabla y_0 - \nabla y_s \Vert_{L^2(\Omega) }^2 + s \Vert \nabla y_1 - \nabla y_s \Vert_{L^2(\Omega) }^2 = s(1-s) \Vert \nabla y_0 - \nabla y_1 \Vert_{L^2(\Omega) }^2
\end{align*}
\REVB and the choice \END $\lambda \defas -2C$ concludes the proof of \eqref{conv:phi} and (i).

For the proof of (ii) we can argue along the lines of \cite[Theorem~4.7(ii)]{MFMK} as the elastic energy satisfies identical assumptions. Thus, we have
 \begin{align*}
 \phi (y_s) &\leq (1-s) \phi (y_0) + s \phi(y_1) \\&\qquad - s(1-s) \int_\Omega \vert \sym(\nabla y_1 - \nabla y_0) \vert^2  \di x + \frac{1}{2} s(1-s) C \delta^{2/p} \int_\Omega \vert \nabla (y_1 - y_0) \vert^2 \di x.
 \end{align*}  
 As $y_1 - y_0 = 0 $ on $\Gamma_D$, we can employ Korn's inequality and \eqref{equivalence}, yielding the statement.
 

We now address (iii).
 We exploit the explicit structure of the metric in \eqref{explicitchoiceofD} and \eqref{dissipationdistance}. As matrix multiplication is a linear operation and the metric satisfies the bound \ref{D_bound}, we can assume without loss of generality that $D (F_1,F_2) =  \vert F_1^T F_1 - F_2^T F_2 \vert$. An elementary expansion gives
\begin{align*}
& \quad (\nabla y_s)^T \nabla y_s - (\nabla y_0)^T \nabla y_0 \\& =((1-s) \nabla y_0 + s \nabla y_1)^T ((1-s) \nabla y_0 + s \nabla y_1) - (\nabla y_0)^T \nabla y_0 \\
& =s \Big(  - 2(\nabla y_0)^T \nabla y_0 +(\nabla y_0)^T \nabla y_1 \\ &\qquad + s(\nabla y_0)^T \nabla y_0 - s(\nabla y_0)^T \nabla y_1 + (\nabla y_1)^T \nabla y_0 - s (\nabla y_1)^T \nabla y_0 + s(\nabla y_1)^T \nabla y_1 \Big) \\
& = s \big( (\nabla y_1)^T \nabla y_1 - (\nabla y_0)^T \nabla y_0 + (s-1) (\nabla y_1 - \nabla y_0)^T (\nabla y_1 - \nabla y_0) \big).
\end{align*}
By using \eqref{inequalitytildep} we get that
\begin{align}
&\vert (\nabla y_s)^T \nabla y_s - (\nabla y_0)^T \nabla y_0 \vert^{\tilde p} \leq s^{\tilde p} \Big( \vert (\nabla y_1)^T \nabla y_1 - (\nabla y_0)^T \nabla y_0  \vert^{\tilde p} + \vert \nabla y_1 - \nabla y_0\vert^{2 \tilde p} \notag \\&\quad + C\big(\vert (\nabla y_1)^T \nabla y_1 - (\nabla y_0)^T \nabla y_0  \vert^{\tilde p-1}\vert \nabla y_1 - \nabla y_0\vert^{2} +\vert (\nabla y_1)^T \nabla y_1 - (\nabla y_0)^T \nabla y_0  \vert \vert \nabla y_1 - \nabla y_0\vert^{2(\tilde p-1)} \big) \Big) \notag \\
&\quad \eqcolon s^{\tilde p} ( A_1 + A_2 + A_3 + A_4), \label{expansion1}
\end{align}
 where each $A_i$, $i = 1,...,4$, corresponds to  exactly one summand in its respective order.
Recalling \eqref{dissipationdistance} and employing \eqref{Kornineq}, we find
\begin{align} \label{expansion2}
\int_\Omega A_2 \di x \leq \Vert \nabla y_1 - \nabla y_0\Vert_{L^\infty(\Omega)}^{\tilde p} \int_\Omega \vert \nabla y_1 - \nabla y_0 \vert^{\tilde p} \di x \leq C \Vert \nabla y_1 - \nabla y_0\Vert_{L^\infty(\Omega)}^{\tilde p} \mathcal{D} (y_1,y_0)^{\tilde p}.
\end{align}
Moreover, Hölder's inequality with coefficients $\tilde p/(\tilde p-1)$ and $\tilde p$ and Lemma~\ref{nonlinearKornappl} imply that
\begin{align}
\int_\Omega A_3 \di x & \leq C \Bigg(\int_\Omega \vert (\nabla y_1)^T \nabla y_1 - (\nabla y_0)^T \nabla y_0  \vert^{\tilde p} \di x \Bigg)^{(\tilde p-1)/\tilde p} \Bigg(\int_\Omega \vert \nabla y_1 - \nabla y_0\vert^{2 \tilde p} \di x \Bigg)^{1/\tilde p} \notag \\
& \leq  C\Vert \nabla y_1 - \nabla y_0\Vert_{L^\infty(\Omega)} \mathcal{D}(y_1,y_0)^{(\tilde p-1)}  \mathcal{D}(y_1,y_0)  \notag \\
& = C \Vert \nabla y_1 - \nabla y_0\Vert_{L^\infty(\Omega)} \mathcal{D}(y_1,y_0)^{\tilde p}.  \label{expansion3}
\end{align} 
Similarly, we derive by using Hölder's inequality with coefficients $\tilde p$ and $\tilde p/(\tilde p-1)$ and Lemma~\ref{nonlinearKornappl} that
\begin{align}
\int_\Omega A_4 \di x & \leq C \Bigg(\int_\Omega \vert (\nabla y_1)^T \nabla y_1 - (\nabla y_0)^T \nabla y_0  \vert^{\tilde p} \di x \Bigg)^{1/\tilde p} \Bigg(\int_\Omega \vert \nabla y_1 - \nabla y_0\vert^{2 \tilde p} \di x \Bigg)^{(\tilde p-1)/\tilde p} \notag \\
& \leq C \Vert \nabla y_1 - \nabla y_0\Vert_{L^\infty(\Omega)}^{\tilde p-1} \mathcal{D}(y_1,y_0)^{\tilde p}.  \label{expansion4}
\end{align}
 Thus, by using \eqref{aprioribounds} and \eqref{expansion1}--\eqref{expansion4} we can conclude the proof of \eqref{conv:D}.
\end{proof}

\begin{lemma}[Properties of $\vert \partial \phi \vert_{\mathcal{D}}$ for large strains]\label{lem:stronguppergradient3d}    Let $M>0$ and $\tilde p \in \REVB (1,+\infty) \END$. Then, there exists some $\kappa>0$ and $\lambda \in \R$ such that     the local slope $\vert \partial \phi \vert_{\mathcal{D}}$ 	\begin{itemize}
\item[(i)]admits the representation
\begin{align*}
\vert \partial \phi \vert_{\mathcal{D}}(y) = \sup\limits_{\substack{y\neq w\\\Vert y - w\Vert_{W^{   1,\infty}(\Omega)}    \le       \kappa    '}} \frac{ \left(\phi(y) - \phi(w)  + \REVB \tfrac{1}{2}    \lambda   \Vert \nabla y - \nabla w \Vert_{L^2(\Omega)}^2  \END   \right)^+}{\mathcal{D}(y,w) (1 + C \Vert \nabla w - \nabla y \Vert_{L^\infty(\Omega) }^{\tilde p-1} + C \Vert \nabla w - \nabla y \Vert_{L^\infty(\Omega) })^{1/\tilde p} }
\end{align*}
for any $\kappa' \leq \kappa$.
		\item[(ii)] is lower semicontinuous with respect to the weak topology in $W^{2,p}(\Omega;\R^d)$.
	\item[(iii)] is a strong upper gradient for $\phi$,
	\end{itemize} 	
 \end{lemma}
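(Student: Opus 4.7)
The plan is to establish (i), (ii), (iii) in that order, with (i) being the main technical step: once the supremum representation is available, (ii) becomes a standard supremum-of-lower-semicontinuous-functions argument, and (iii) reduces to an abstract chain-rule estimate along absolutely continuous curves. Throughout I write $\Phi_{w}(y)$ for the single-$w$ quotient appearing under the supremum in (i).

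For the ``$\geq$'' direction of (i), fix $w\in\mathscr{S}_{M}$ with $w\neq y$ and $\Vert y-w\Vert_{W^{1,\infty}(\Omega)}\leq \kappa'$, where $\kappa'\leq \kappa$ is the radius from Lemma~\ref{lem:localconv}(i), and form the convex combination $y_s=(1-s)y+sw$. Since $\Vert y-y_s\Vert_{W^{1,\infty}(\Omega)}=s\Vert y-w\Vert_{W^{1,\infty}(\Omega)}\leq \kappa$, Lemma~\ref{lem:localconv}(i) rearranges to $\phi(y)-\phi(y_s)\geq s\bigl(\phi(y)-\phi(w)+\tfrac{\lambda}{2}(1-s)\mathcal{D}(y,w)^{\tilde p}\bigr)$, while Lemma~\ref{lem:localconv}(iii) provides $\mathcal{D}(y_s,y)\leq s\mathcal{D}(w,y)\bigl[1+C\Vert\nabla w-\nabla y\Vert_{L^\infty(\Omega)}^{\tilde p-1}+C\Vert\nabla w-\nabla y\Vert_{L^\infty(\Omega)}\bigr]^{1/\tilde p}$. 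Forming the quotient $(\phi(y)-\phi(y_s))^{+}/\mathcal{D}(y,y_s)$ and letting $s\to 0^{+}$ (so that $y_s\to y$ in $\mathcal{D}$ by \eqref{equivalence}) yields $|\partial\phi|_{\mathcal{D}}(y)\geq \Phi_{w}(y)$ for every admissible $w$, and the supremum delivers the inequality. For the reverse direction, take a sequence $z_n\to y$ in $\mathcal{D}$ attaining the $\limsup$ defining $|\partial\phi|_{\mathcal{D}}(y)$. Lemma~\ref{th: metric space}(iii) gives $\Vert\nabla z_n-\nabla y\Vert_{L^\infty(\Omega)}\to 0$, so $z_n$ is eventually admissible, the denominator factor in $\Phi_{z_n}(y)$ tends to $1$, and (since $\lambda<0$ and $\tilde p>1$) the correction $\tfrac{|\lambda|}{2}\mathcal{D}(y,z_n)^{\tilde p-1}\to 0$. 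A case split---isolating subsequences with $\phi(y)-\phi(z_n)\geq\tfrac{|\lambda|}{2}\mathcal{D}(y,z_n)^{\tilde p}$, along which the positive part is effective and gives $\Phi_{z_n}(y)\geq (\phi(y)-\phi(z_n))^{+}/\mathcal{D}(y,z_n)-o(1)$, from those with $(\phi(y)-\phi(z_n))^{+}\lesssim\mathcal{D}(y,z_n)^{\tilde p}$, which cannot contribute to any positive $\limsup$---concludes $\sup_{w}\Phi_{w}(y)\geq|\partial\phi|_{\mathcal{D}}(y)$.

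For (ii), the compact embedding $W^{2,p}(\Omega;\R^{d})\hookrightarrow W^{1,\infty}(\Omega;\R^{d})$ (valid since $p>d$) converts weak $W^{2,p}$-convergence $y_n\rightharpoonup y$ into strong $W^{1,\infty}$-convergence. For each fixed $w$ with $\Vert y-w\Vert_{W^{1,\infty}(\Omega)}\leq \kappa_1<\kappa$, $w$ is therefore eventually admissible in the $y_n$-supremum, $\mathcal{D}(y_n,w)\to\mathcal{D}(y,w)$ by Lemma~\ref{th: metric space}(v), the $L^\infty(\Omega)$-norms in the denominator converge, and $\liminf_{n}\phi(y_n)\geq\phi(y)$ by Lemma~\ref{th: metric space}(iv); propagating these through the positive part yields $\liminf_{n}\Phi_{w}(y_n)\geq \Phi_{w}(y)$. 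The standard sup-$\liminf$ manipulation, combined with a two-radii argument $\kappa_1<\kappa_2<\kappa$ that applies the representation from (i) to $y$ with radius $\kappa_1$ and to $y_n$ with radius $\kappa_2$, then gives $\liminf_{n}|\partial\phi|_{\mathcal{D}}(y_n)\geq |\partial\phi|_{\mathcal{D}}(y)$.

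For (iii), I rewrite (i) in the form $\phi(y)-\phi(w)\leq |\partial\phi|_{\mathcal{D}}(y)\,\mathcal{D}(y,w)\bigl[1+C\Vert\nabla w-\nabla y\Vert_{L^\infty(\Omega)}^{\tilde p-1}+C\Vert\nabla w-\nabla y\Vert_{L^\infty(\Omega)}\bigr]^{1/\tilde p}+\tfrac{|\lambda|}{2}\mathcal{D}(y,w)^{\tilde p}$ (valid for admissible $w$) and apply it at $y=y(r)$, $w=y(r\pm h)$ along an absolutely continuous curve $y\colon(a,b)\to\mathscr{S}_{M}$; for small $h>0$, admissibility $\Vert y(r)-y(r+h)\Vert_{W^{1,\infty}(\Omega)}\leq \kappa$ is automatic by continuity in $\mathcal{D}$ and Lemma~\ref{th: metric space}(iii). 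Dividing by $h$ and passing to $h\to 0$ at almost every $r$ (the $\mathcal{D}^{\tilde p}/h$ remainder vanishes thanks to $\tilde p>1$) gives $\bigl|\tfrac{\rm d}{{\rm d}r}\phi(y(r))\bigr|\leq|\partial\phi|_{\mathcal{D}}(y(r))\,|y'|_{\mathcal{D}}(r)$ a.e., with Borel measurability of $r\mapsto|\partial\phi|_{\mathcal{D}}(y(r))$ supplied by (ii) together with continuity of $y(\cdot)$; integration produces the strong-upper-gradient inequality, along the lines of \cite[Theorem~1.2.5]{AGS}. The main obstacle is the ``$\leq$'' direction of (i): because $\lambda<0$, the correction $\tfrac{\lambda}{2}\mathcal{D}^{\tilde p}$ lowers the value inside the $(\cdot)^{+}$ and could, in principle, spoil the estimate, so one must verify that only sequences along which this correction is asymptotically negligible relative to $\phi(y)-\phi(z_n)$ contribute to the $\limsup$---this is precisely the role of the case split above.
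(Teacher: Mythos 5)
Your treatment of (i) and (ii) follows essentially the same route as the paper: convex combinations together with Lemma~\ref{lem:localconv}(i),(iii) for the lower bound; the limsup definition combined with the $W^{1,\infty}$-convergence induced by $\mathcal{D}$ (Lemma~\ref{th: metric space}(ii),(iii),(v)) for the upper bound; and a two-radii argument plus $\mathcal{D}$-continuity and $\phi$-lower semicontinuity for (ii). Your case split in the ``$\leq$'' direction of (i) is a correct, somewhat more verbose, elaboration of the paper's shorter observation that $\tilde p>1$ forces $\lambda\mathcal{D}(y,w)^{\tilde p-1}\to 0$ and the denominator factor $\to 1$ as $w\to y$; it is not actually the hardest point.

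There is a genuine gap in (iii). Your argument obtains the a.e.\ pointwise bound $\bigl|\tfrac{\rm d}{{\rm d}r}\phi(y(r))\bigr|\leq|\partial\phi|_{\mathcal{D}}(y(r))\,|y'|_{\mathcal{D}}(r)$ and then ``integrates,'' but this presupposes that $\phi\circ y$ is absolutely continuous (both for a.e.\ differentiability to capture the variation and for the fundamental theorem of calculus to hold). An a.e.\ bound on difference quotients alone gives no control on $|\phi(y(t))-\phi(y(s))|$: the Cantor function has vanishing derivative a.e.\ yet increases. Moreover, the estimate you extract from (i), namely $\phi(y(r))-\phi(y(r+h))\lesssim h$, is only available when $\Vert y(r)-y(r+h)\Vert_{W^{1,\infty}(\Omega)}\leq\kappa$, so it is inherently local, and because $\lambda<0$ the quantity $|\partial\phi|_{\mathcal{D}}$ does not satisfy the \emph{global} estimate that makes the global slope a strong upper gradient. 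The paper handles this by observing that the local slope is automatically a weak upper gradient by \cite[Theorem 1.2.5]{AGS}, so one only needs to verify that $\phi\circ z$ is absolutely continuous whenever $|\partial\phi|_{\mathcal{D}}(z)\,|z'|_{\mathcal{D}}\in L^1$; this is established by covering the compact time interval by finitely many subintervals on which $\Vert z(u)-z(v)\Vert_{W^{1,\infty}(\Omega)}<\kappa$, comparing on each piece with the \emph{global} slope of $\phi$ restricted to that piece (which \emph{is} a strong upper gradient by \cite[Theorem 1.2.5]{AGS}), and bounding the global slope by $C|\partial\phi|_{\mathcal{D}}+C$ via the representation from (i). Citing \cite[Theorem 1.2.5]{AGS} ``along the lines of'' does not supply this localization and global-slope comparison, which is the technical core of (iii); as written your proof of (iii) does not close.
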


Similar properties of the local slope have been shown in \cite{MFMK,MFMKDimension,MFLMDimension3D1D}. We follow the lines of the latter articles, but highlight that the main difference to the latter articles is the representation in (i) which holds for \emph{large strains} and \REVB $\tilde p \in (1, +\infty)$. \END

\begin{proof}

We first prove the representation.  Whenever     $y \neq w$ and $  \phi(y) - \phi(w)  + \tfrac{1}{2}    \lambda   \REVB \Vert \nabla y - \nabla w \Vert_{L^2(\Omega)}^2    \END   >0$ we obtain by \eqref{conv:phi} 
\begin{align*}
	\frac{\phi(y) - \phi(y_s)}{\mathcal{D}(y,y_s)} \geq \frac{ \phi(y) - \phi(w)  + \tfrac{1}{2} \lambda (1-s)   \REVB \Vert \nabla y - \nabla w \Vert_{L^2(\Omega)}^2    \END  }{\mathcal{D}(y,w)}\frac{s\mathcal{D}(y,w)}{\mathcal{D}(y,y_s)}
\end{align*}
 for $ \Vert  y - w\Vert_{W^{1,\infty}(\Omega)} \leq       \kappa $ and $y_s = (1-s)y + sw$.
Then, \eqref{conv:D} implies that
\begin{align*}
\frac{\phi(y) - \phi(y_s)}{\mathcal{D}(y,y_s)} \geq \frac{ \phi(y) - \phi(w)  + \tfrac{1}{2} \lambda (1-s)   \REVB \Vert \nabla y - \nabla w \Vert_{L^2(\Omega)}^2    \END } {\mathcal{D}(y,w) (1 + C \Vert \nabla w - \nabla y \Vert_{L^\infty(\Omega) }^{\tilde p-1} + C \Vert \nabla w - \nabla y \Vert_{L^\infty(\Omega) })^{1/\tilde p} }   .
\end{align*}
Thus, by taking the limit    $s \to 0 $    and the supremum, we obtain for any $   \kappa    ' \leq        \kappa $ 
\begin{align}\label{slopelowerbound}
\vert \partial \phi \vert_{\mathcal{D}}(y) \geq \sup\limits_{\substack{y\neq w\\\Vert y - w\Vert_{W^{   1,\infty}(\Omega)}    \le       \kappa    '}} \frac{ \left(\phi(y) - \phi(w)  + \tfrac{1}{2}    \lambda        \REVB \Vert \nabla y - \nabla w \Vert_{L^2(\Omega)}^2    \END   \right)^+}{\mathcal{D}(y,w) (1 + C \Vert \nabla w - \nabla y \Vert_{L^\infty(\Omega) }^{\tilde p-1} + C \Vert \nabla w - \nabla y \Vert_{L^\infty(\Omega) })^{1/\tilde p} }.
\end{align}
We also    get    the reverse inequality
\begin{align}\label{slopeupperbound}
\vert \partial \phi \vert_{   \mathcal{D}}(y) =& \limsup\limits_{w \to y}  \frac{ \left(\phi(y) - \phi(w)  + \tfrac{1}{2}    \lambda         \REVB \Vert \nabla y - \nabla w \Vert_{L^2(\Omega)}^2    \END  \right)^+}{\mathcal{D}(y,w) (1 + C \Vert \nabla w - \nabla y \Vert_{L^\infty(\Omega) }^{\tilde p-1} + C \Vert \nabla w - \nabla y \Vert_{L^\infty(\Omega) })^{1/\tilde p} }  \nonumber \\
\leq& \sup\limits_{\substack{y\neq w\\\Vert y - w\Vert_{W^{   1,\infty}(\Omega)}    \le       \kappa    '}} \frac{ \left( \phi(y) - \phi(w)  + \tfrac{1}{2}    \lambda        \REVB \Vert \nabla y - \nabla w \Vert_{L^2(\Omega)}^2    \END  \right)^+}{\mathcal{D}(y,w) (1 + C \Vert \nabla w - \nabla y \Vert_{L^\infty(\Omega) }^{\tilde p-1} + C \Vert \nabla w - \nabla y \Vert_{L^\infty(\Omega) })^{1/\tilde p} }  ,
\end{align}
where we used    Lemma \ref{th: metric space}(v), $\tilde p >1$, and the fact    that $w \to y$ with respect to $\mathcal{D}$ implies $\Vert y - w \Vert_{W^{1,\infty}(\Omega)} \to 0$, see    Lemma~\ref{th: metric space}(ii),(iii). 

We are now ready to confirm
the lower semicontinuity. Consider a sequence $(y_n)_n$, $y_n \in \mathscr{S}_{M}$, converging weakly to $y \in \mathscr{S}_{M}$ in $W^{2,p}(\Omega)$ (and equivalently with respect to $\mathcal{D}$, see     Lemma~\ref{th: metric space}(ii),(iii)).   
For $w \neq y$ such that $\Vert w- y \Vert_{W^{   1,\infty}(\Omega)} \leq      \kappa/2    $,    we have    $w\neq y_n$  and  $\Vert w- y_n \Vert_{W^{   1,\infty}(\Omega)} \leq       \kappa $    for $n$ large enough,    and thus
\begin{align*}
	\liminf\limits_{n\to \infty} \vert \partial \phi\vert_{\mathcal{D}}(y_n)    & \ge     \liminf\limits_{n\to \infty} \frac{ \left( \phi(y_n) - \phi(w)  + \tfrac{1}{2}    \lambda     \REVB \Vert \nabla y_n - \nabla w \Vert_{L^2(\Omega)}^2    \END \right)^+}{\mathcal{D}(y_n,w) (1 + C \Vert \nabla w - \nabla y_n \Vert_{L^\infty(\Omega) }^{\tilde p-1} + C \Vert \nabla w - \nabla y_n \Vert_{L^\infty(\Omega) })^{1/\tilde p} } \\  
	&\geq \frac{ \left( \phi(y) - \phi(w)  + \tfrac{1}{2}    \lambda         \REVB \Vert \nabla y - \nabla w \Vert_{L^2(\Omega)}^2    \END   \right)^+}{\mathcal{D}(y,w) (1 + C \Vert \nabla w - \nabla y \Vert_{L^\infty(\Omega) }^{\tilde p-1} + C \Vert \nabla w - \nabla y \Vert_{L^\infty(\Omega) })^{1/\tilde p} },
\end{align*}
where we used    Lemma~\ref{th: metric space}(ii)--(v),    and    \eqref{slopelowerbound} for $   \kappa    ' =    \kappa$.     By taking the supremum with respect to $w$ and Lemma~\ref{lem:stronguppergradient3d}(i) for $   \kappa    ' =    \kappa/2$,    the
lower semicontinuity follows.

We now show (iii). As the local slope is a weak upper gradient in the sense of Definition \cite[Definition 1.2.2]{AGS} by \cite[Theorem  1.2.5]{AGS}, we only need to show that for an absolutely continuous curve $z\colon(a,b) \to \mathscr{S}_{M}$     (with respect to $\mathcal{D}$)  with $-\infty < a < b < +\infty$  satisfying $\vert \partial \phi\vert_{\mathcal{D}}(z) \vert z '\vert_{\mathcal{D}} \in L^1(a,b)$ the curve $\phi \circ z$ is absolutely continuous.   
We argue similarly to \cite[Corollary 2.4.10]{AGS} and extend the curve $z$ by continuity to $[a,b]$.

For any $t \in [a,b]$, we consider a ball $B^{W^{1,\infty}(\Omega;\R^d)}_{\kappa/2} (z(t))$, where $\kappa>0$ is the constant from (i).
Since the topologies induced by $W^{1,\infty}(\Omega;\R^d)$ and $\mathcal{D}$ coincide due to Lemma~\ref{th: metric space}(iii), the function $z$ is continuous with respect to the $W^{1,\infty}(\Omega;\R^d)$ topology. Thus, for each $t \in [a,b]$ there exists some $\delta_t >0$ such that $\vert t- \tilde a \vert < \delta_t$ implies that $z(\tilde a) \in B^{W^{1,\infty}(\Omega;\R^d)}_{\kappa/2} (z(t))$.
Since $[a,b]$ is compact there exists some $m \in \N$ such that $\bigcup_{i=1}^m B_{\delta_{t_i}}(t_i) = [a,b]$. 
Thus, there exists a partition $s_0<s_1<...<s_m$ such that $\Vert z(u) - z(v) \Vert_{W^{1,\infty}(\Omega;\R^d)} < \kappa$ for each $u,v \in [s_{i-1}, s_i]$.
We introduce the compact metric spaces $ \mathscr{S}^{i}_{M}\defas z([s_{i-1},s_i])$ with the metric $\mathcal{D}$, and we consider the related global slope $I^i_{\phi}$, see Definition~\cite[Definition 1.2.4]{AGS}. Note that \REVB \eqref{aprioribounds} and \END \eqref{equivalence} give $\sup\limits_{x,y \in \mathscr{S}^{i}_{M}} \mathcal{D}(x,y) \leq C$ \REVB and $\Vert \nabla y - \nabla w \Vert_{L^2(\Omega)}^2 \leq C {\mathcal{D}(y,w)}^{\tilde p \wedge 2}$, where $\wedge$ denotes the minimum function. \END Hence, \eqref{aprioribounds}  implies that
\begin{align*}
I^i_{\phi}(y) &= \sup\limits_{ {y\neq w\in \mathscr{S}^{i}_{M} }} \frac{ \left(\phi(y) - \phi(w)     \right)^+}{\mathcal{D}(y,w)   }  \\
&\leq C \sup\limits_{\substack{y\neq w\in \mathscr{S}^{i}_{M} \\ \Vert y - w\Vert_{W^{   1,\infty}(\Omega)}    \le       \kappa}} \frac{ \left(\phi(y) - \phi(w)    +\tfrac{1}{2}    \lambda     \REVB \Vert \nabla y - \nabla w \Vert_{L^2(\Omega)}^2    \END   \right)^+ }{\mathcal{D}(y,w)  (1 + C \Vert \nabla w - \nabla y \Vert_{L^\infty(\Omega) }^{\tilde p-1} + C \Vert \nabla w - \nabla y \Vert_{L^\infty(\Omega) })^{1/\tilde p} } \\
& \qquad -  \sup\limits_{\substack{y\neq w\in \mathscr{S}^{i}_{M} \\ \Vert y - w\Vert_{W^{   1,\infty}(\Omega)}    \le       \kappa}}   \frac{  \REVB \tfrac{1}{2}    \lambda  \Vert \nabla y - \nabla w \Vert_{L^2(\Omega)}^2    \END }{     \mathcal{D}(y,w) } \\
&\leq C \sup\limits_{\substack{y\neq w\in \mathscr{S}_{M} \\ \Vert y - w\Vert_{W^{   1,\infty}(\Omega)}    \le       \kappa}} \frac{ \left(\phi(y) - \phi(w)    +\tfrac{1}{2}    \lambda       \REVB \Vert \nabla y - \nabla w \Vert_{L^2(\Omega)}^2    \END    \right)^+ }{\mathcal{D}(y,w)  (1 + C \Vert \nabla w - \nabla y \Vert_{L^\infty(\Omega) }^{\tilde p-1} + C \Vert \nabla w - \nabla y \Vert_{L^\infty(\Omega) })^{1/\tilde p} } + C \\
& = C \vert \partial \phi \vert_{\mathcal{D}}(y) + C.
\end{align*}
We now choose for any $u<v \in [a,b]$ the indices $i_u$ and $i_v \in \{1,...,m\}$ such that $u \in [s_{i_u-1},s_{i_u}]$ and $v \in [s_{i_v-1},s_{i_v}]$.
Since the global slope is a strong upper gradient due to \cite[Theorem 1.2.5]{AGS}, we have by the definition of a strong upper gradient that
\begin{align*}
  \vert \phi(z(u)) - \phi(z(v))\vert &\leq \vert \phi(z(u)) - \phi(z(s_{i_u}))\vert \\& \qquad + \sum_{i = i_{u}+1}^{i_{v}-1}  \vert \phi(z(s_i)) - \phi(z(s_{i-1}))\vert +\vert \phi(z(v)) - \phi(z(s_{i_v-1}))\vert \\
&\leq \int_u^{s_{i_u}} I^{i_u}_{\phi}(z(r)) \vert z' \vert_{\mathcal{D}}(r)\di r + \sum_{i = i_{u}+1}^{i_{v}-1} \int_{s_{i-1}}^{s_{i}} I^{i}_{\phi}(z(r)) \vert z' \vert_{\mathcal{D}}(r)\di r  \\&\qquad + \int_{s_{i_v-1}}^{v} I^{i_v}_{\phi}(z(r)) \vert z' \vert_{\mathcal{D}}(r)\di r \\
&\leq \int_u^v  ( C \vert \partial \phi \vert_{\mathcal{D}}(z(r)) + C) \vert z' \vert_{\mathcal{D}}(r)\di r.
\end{align*}
The right-hand side is finite due to our assumption and \cite[Theorem 1.1.2]{AGS}. Thus, $\phi(z(\cdot))$ is absolutely continuous and (iii) follows.
\end{proof}
\REVB A refined representation can be derived for small data, where $\lambda>0$ can be chosen to be positive, and the local supremum can be replaced by a global one. \END
\begin{lemma}[Representation of $\vert \partial \phi \vert_{\mathcal{D}}$ for small strains]\label{lem:repsmallstrain}    Let $M>0$ and $\tilde p \in (1,+\infty)$.
	 Moreover, \REV assume that \ref{Sadditional} holds and \END set $M = M' \delta^2$ for $M'>0$.
	Then, there exists some $\delta' >0 $ such that for all $0<\delta < \delta'$      the local slope $\vert \partial \phi \vert_{\mathcal{D}}$ 	 
	 admits the representation
	\begin{align*}
	\vert \partial \phi \vert_{\mathcal{D}}(y) = \sup\limits_{y\neq w} \frac{ \left(\phi(y) - \phi(w)  + \tfrac{1}{2}    \lambda  \REV   \Vert \nabla y - \nabla w \Vert_{L^2(\Omega)}^2 \END    \right)^+}{\mathcal{D}(y,w) (1 + C \Vert \nabla w - \nabla y \Vert_{L^\infty(\Omega) }^{\tilde p-1} + C \Vert \nabla w - \nabla y \Vert_{L^\infty(\Omega) })^{1/\tilde p} }
	\end{align*}
	for any $\lambda >0$. 	
	\end{lemma}

	\begin{proof}
We can argue along the lines of Lemma~\ref{lem:stronguppergradient3d} and have to replace \eqref{conv:phi} by
\eqref{conv:phi2}. Moreover, one uses \cite[Lemma~4.2]{MFMK} to observe that $\Vert \nabla w_1  -\nabla w_0 \Vert_{L^\infty(\Omega)} \leq C \delta^{2/p}$. Thus, one needs to choose $\delta'$ so small such that $C (\delta')^{2/p} \leq \kappa$.
	\end{proof}

  One of our goals will be to establish a relation of curves of maximal slope to the system of equations. The natural idea is to make use of the energy-dissipation-balance \eqref{maximalslope}. For this purpose, we use a finer representation of the local slope.
For convenience, we introduce a differential operator associated to the perturbation $P$ and define
\begin{align*}
(\mathcal{L}_P(x,\nabla^2 y) )_{ij} \defas - \big( \diver ( \partial_G P(x,\nabla^2 y) ) \big)_{ij} = - \sum\limits_{k=1}^d \partial_k (\partial_G P(x,\nabla^2 y) )_{ijk}, \qquad i,j \in \{1,...,d\}.
\end{align*}
Given a smooth function $\varphi\colon \Omega \to \R^d$, we use the notation $(\nabla \varphi)_{ik} = \partial_k \varphi_i$ and $(\nabla^2 \varphi)_{ijk} = \partial^2_{jk} \varphi_i$ for $i,j,k \in \{1,..., d\}$. Then, for later purposes, we note that this operator particularly satisfies
\begin{align}\label{identitytensor}
\int_\Omega \mathcal{L}_P(x,\nabla^2 y)   : \nabla \varphi  \di x   =  \int_\Omega \partial_G P(x,\nabla^2 y)   \cdddot  \nabla^2 \varphi \di x,
\end{align}  whenever integration by parts makes sense and $\varphi$ vanishes on $\partial \Omega$. 



\begin{lemma}[Finer representation of the slope]\label{finerepresentation}
	\REV Let $M>0$ and $\tilde p \in (1,+\infty)$. \END For any $ y \in \mathscr{S}_{M}$ the local slope $\vert \partial \phi \vert_{\mathcal{D}}$ admits the representation
\begin{align}\label{finerepresentationformula}
\vert \partial \phi \vert_{\mathcal{D}}(y) = \begin{cases}
 \left( \int_\Omega  {\tilde p} R(x,\nabla  y, \nabla \bar w)   \di x\right)^{1-1/\tilde p}, &\quad \diver \mathcal{L}_ P(\nabla^2 y) \in W^{-1,{\tilde p/ (\tilde p -1 )}}(\Omega), \\
+\infty, &\quad {\rm{else}},
\end{cases}
\end{align}
where $\bar w \in W_0^{1,\tilde p}(\Omega) $ satisfies
\begin{align}\label{eulerlagrangefordifferentialop}
&\int_\Omega \big( \partial_F W(x,\nabla y) + \mathcal{L}_P(x,\nabla^2 y) \big) : \nabla \varphi - f  \cdot \varphi \di x = \int_\Omega \partial_{\dot{F}}R(x,\nabla y, \nabla \bar w) : \nabla \varphi \di x
\end{align}
 for any $\varphi \in W_0^{1,\tilde p}(\Omega)$. 
\end{lemma}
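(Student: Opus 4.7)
My plan is to identify $|\partial\phi|_{\mathcal D}(y)$ with the Legendre-type dual of a convex variational problem localized at $y$, via a joint linearization of $\phi$ and of the metric $\mathcal D$ along directions vanishing on $\partial\Omega$. To set the stage, I introduce the linear functional $L\colon W_0^{1,\tilde p}(\Omega;\R^d)\to\R$ by
$$L(\varphi):=\int_\Omega\bigl(\partial_F W(x,\nabla y)+\mathcal{L}_P(x,\nabla^2 y)\bigr):\nabla\varphi\,\di x-\int_\Omega f\cdot\varphi\,\di x.$$
Thanks to \eqref{aprioribounds}, \ref{W_regularity}, and $f\in L^\infty(\Omega)$, the $\partial_F W$- and $f$-contributions are automatically continuous on $W_0^{1,\tilde p}$, so $L$ extends to a bounded functional on $W_0^{1,\tilde p}$ if and only if $\diver\mathcal{L}_P(\nabla^2 y)\in W^{-1,\tilde p/(\tilde p-1)}(\Omega)$. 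This already accounts for the dichotomy in \eqref{finerepresentationformula}.

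In the finite case, \eqref{eulerlagrangefordifferentialop} is precisely the Euler-Lagrange equation of the strictly convex, coercive functional $J(v):=\int_\Omega R(x,\nabla y,\nabla v)\,\di x-L(v)$ on $W_0^{1,\tilde p}$; strict convexity follows from \eqref{defRx}--\eqref{explicitex} together with the linear Korn inequality \eqref{Kornineqlinear} (with $y$ as background deformation), and coercivity is then immediate. The direct method furnishes the unique minimizer $\bar w$, and testing \eqref{eulerlagrangefordifferentialop} with $\varphi=\bar w$ plus Euler's identity for the positively $\tilde p$-homogeneous map $\dot F\mapsto R(x,\nabla y,\dot F)$ yields
$$L(\bar w)=\int_\Omega\partial_{\dot F}R(x,\nabla y,\nabla\bar w):\nabla\bar w\,\di x=\tilde p\int_\Omega R(x,\nabla y,\nabla\bar w)\,\di x.$$

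It remains to match $|\partial\phi|_{\mathcal D}(y)$ with $S:=\bigl(\tilde p\int_\Omega R(x,\nabla y,\nabla\bar w)\,\di x\bigr)^{1-1/\tilde p}$ via a two-sided estimate. For the lower bound, pick $v\in C_c^\infty(\Omega;\R^d)$ and set $z_\tau:=y+\tau v\in\mathscr S_M$ for $\tau>0$ small; a Taylor expansion of $\phi$ combined with \eqref{identitytensor} gives $\phi(y)-\phi(z_\tau)=-\tau L(v)+O(\tau^2)$, while \eqref{linearizethemetric} and the $\tilde p$-homogeneity of $R$ yield $\mathcal D(y,z_\tau)=\tau\bigl(\tilde p\int_\Omega R(x,\nabla y,\nabla v)\,\di x\bigr)^{1/\tilde p}+o(\tau)$. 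Passing to $\tau\downarrow 0$ (replacing $v$ by $-v$ if needed), taking the supremum over $v$ and using density of $C_c^\infty$ in $W_0^{1,\tilde p}$, and finally invoking a Lagrange-multiplier computation exploiting the $\tilde p$-homogeneity of $R$ and the identity $L(\bar w)=\tilde p\int R(\bar w)$ just derived, one obtains $\sup_{v\neq 0}|L(v)|/(\tilde p\int R)^{1/\tilde p}=S$, whence $|\partial\phi|_{\mathcal D}(y)\ge S$. For the upper bound, take $z\in\mathscr S_M$ close to $y$ in $W^{1,\infty}$ (equivalent to $\mathcal D$-closeness by Lemma~\ref{th: metric space}(iii)) and write $u:=z-y$. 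By convexity of $P$ in $G$, its first-order Taylor remainder is non-negative, so $\phi(y)-\phi(z)\le -L(u)+C\|\nabla u\|_{L^2}^2$, and the last term is $o(\|\nabla u\|_{L^{\tilde p}})$ as $\|\nabla u\|_{L^\infty}\to 0$ (by interpolation if $\tilde p\le 2$, Hölder if $\tilde p\ge 2$). Simultaneously, \eqref{linearizethemetric} combined with Lemma~\ref{nonlinearKornappl} gives $\mathcal D(y,z)\ge\bigl(\tilde p\int_\Omega R(x,\nabla y,\nabla u)\,\di x\bigr)^{1/\tilde p}(1-o(1))$, since the quadratic correction $(\nabla u)^T\nabla u$ contributes only $O(\|\nabla u\|_{L^\infty}^{\tilde p}\int|\nabla u|^{\tilde p})$. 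Combining these with the Legendre-dual inequality $|L(u)|\le S(\tilde p\int R(\nabla u))^{1/\tilde p}$ gives $(\phi(y)-\phi(z))^+/\mathcal D(y,z)\le S(1+o(1))+o(1)$, and $\limsup_{z\to y}$ reverses the lower bound. Finally, in the infinite case, unboundedness of $L$ on $W_0^{1,\tilde p}$ produces (by density) a sequence $v_n\in C_c^\infty$ with $L(v_n)\to+\infty$ and $\int_\Omega R(x,\nabla y,\nabla v_n)\,\di x$ bounded, and feeding these into the lower-bound calculation forces $|\partial\phi|_{\mathcal D}(y)=+\infty$.

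The main conceptual obstacle is the upper bound: since $z\to y$ in $\mathcal D$ only delivers $W^{1,\infty}$- and $W^{1,\tilde p}$-convergence of $z$ to $y$ (Lemma~\ref{th: metric space}(iii)), the Taylor remainder of $P$ cannot be shown to vanish a priori; the crucial observation is that the convexity of $P$ produces a one-sided (non-negative) remainder that can be simply discarded, reducing the task to controlling the Taylor remainder of $W$ and the nonlinear correction in $\mathcal D^{\tilde p}$, both of which are absorbed through the uniform $W^{1,\infty}$-bound \eqref{aprioribounds} and the rigidity estimate \eqref{Kornineq}.
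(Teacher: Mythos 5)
Your proposal is correct and follows the same overall architecture as the paper's proof: introduce the auxiliary convex problem whose Euler--Lagrange equation is \eqref{eulerlagrangefordifferentialop}, obtain the lower bound by evaluating the slope quotient along linear perturbations, obtain the upper bound via a Taylor expansion of $W$, one-sided convexity of $P$, and a lower bound on $\mathcal{D}$ from \eqref{linearizethemetric}. Two places where you genuinely deviate, both for the better in terms of cleanliness: for the lower bound, the paper simply tests along the single direction $w_\gamma\approx\bar w$ and uses \eqref{eulerlagrangefordifferentialop}, whereas you take the supremum of $\Phi(v)$ over all smooth $v$ and identify it with $S$ via a H\"older/Legendre-duality computation — logically this is also what delivers the ``dual inequality'' $|L(u)|\le S(\tilde p\int_\Omega R(x,\nabla y,\nabla u))^{1/\tilde p}$ used in your upper bound, so the two steps share a single estimate that the paper derives separately in Step~2. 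For the case $\diver\mathcal{L}_P(\nabla^2 y)\notin W^{-1,\tilde p/(\tilde p-1)}(\Omega)$, the paper runs a somewhat elaborate argument approximating $y$ by smooth $y_n$ and invoking the representation of Lemma~\ref{lem:stronguppergradient3d}(i), while your argument — feeding a sequence $v_n\in C_c^\infty$ with $\|\nabla v_n\|_{L^{\tilde p}}$ bounded and $L(v_n)\to\infty$ directly into the lower-bound ratio — is shorter and avoids that detour. One small imprecision: you state that the quadratic correction to $\mathcal D$ contributes $O(\|\nabla u\|_{L^\infty}^{\tilde p}\int_\Omega|\nabla u|^{\tilde p})$, but the dominant contribution from \eqref{inequalitytildep} is the cross term of order $\|\nabla u\|_{L^\infty}^{1\wedge(\tilde p-1)}\int_\Omega|\nabla u|^{\tilde p}$ (cf.\ \eqref{helpsbelow}); this still yields the required $(1-o(1))$ factor after applying the Korn inequality \eqref{Kornineqlinear}, so the conclusion stands. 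A second very minor omission, which the paper handles explicitly, is that $y\pm\tau v$ need not remain in $\mathscr{S}_M$ for small $\tau$, requiring a trivial enlargement to $\mathscr{S}_{M'}$.
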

Notice that the representation is different compared to the representation in \cite[Lemma~5.5]{MFMK} where the case $\tilde p = 2$ is covered. The idea is similar, but for the relation of local slope with \eqref{weaksolutiondef} in the proof of Theorem~\ref{maintheorem1}, it is convenient to keep $\bar w$ in the representation of the slope.

Before proving the lemma, we state identities satisfied by the viscous stress tensor. To this end, notice that the scalar product of matrices $U,V \in \R^{d \times d}$ satisfies $U:V = U^T : V^T$. 
Recalling  \eqref{explicitex},
the symmetry of the scalar product implies that for any $F_0, F_1, F_2 \in \R^{d \times d}$ we have
\begin{align}\label{explicit2}
 &\qquad\partial_{\dot{F}}R(x,F_0, F_1) : F_2  \notag \\
 &= \vert A(x) (F_1^T F_0 + F_0^T F_1) \vert^{\tilde p - 2}   \big( F_0 (F_1^T F_0 + F_0^T F_1) A(x)^T A(x)  +   F_0 A(x)^T A(x) (F_1^T F_0 + F_0^T F_1) \big)  : F_2 \notag \\
  &= \vert A(x) (F_1^T F_0 + F_0^T F_1) \vert^{\tilde p - 2}   \big( A(x) (F_1^T F_0 + F_0^T F_1)    : A(x) ( F_2^T F_0 +F_0^T F_2 ) \big).
\end{align}
Hence, in the case $F_1 = F_2$, we thus find by \eqref{defRx} that
\begin{align}\label{relationRandderivative}
 \partial_{\dot{F}}R(x,F_0, F_1) : F_1  = \vert A(x) (F_1^T F_0 + F_0^T F_1) \vert^{\tilde p}     = {\tilde p} R(x,F_0, F_1).
\end{align}

\begin{proof}
For the sake of simplicity, we drop the $x$-dependence of the densities $W$, $R$ and $P$.
Assume first that $\diver \mathcal{L}_P(\nabla^2 y) \in W^{-1,{\tilde p/ (\tilde p -1 )}}(\Omega)$. We consider the following minimization problem
\begin{align}\label{minimization problem}
\min\limits_{w \in W^{1,{\tilde p}}_0(\Omega)} \int_\Omega R(\nabla y, \nabla w) - \big( \partial_F W(\nabla y) + \mathcal{L}_P(\nabla^2 y) \big) : \nabla w + f  \cdot w   \di x .
\end{align}
Recall that $y$ satisfies the bounds \eqref{aprioribounds} and that $f \in L^\infty(\Omega)$. Thus, 
compactness follows by \eqref{Kornineqlinear}, \eqref{defRx}, and \ref{D_bound}. Sequential lower semi-continuity in $W^{1,{\tilde p}}(\Omega)$ follows from the convexity of the map $\dot F \mapsto R(x,F,\dot F)$.
Since the norm is strictly convex, the unique solution $\bar w \in W_0^{1,{\tilde p}}(\Omega) $ exists and satisfies \eqref{eulerlagrangefordifferentialop}.

In what follows, we neglect the force term. As this term corresponds to a linear perturbation, one needs to perform minor adaptions in this case.
 
\emph{Step 1 (Lower bound):} Assume again that $\diver \mathcal{L}_P(\nabla^2 y) \in W^{-1,{\tilde p/ (\tilde p -1 )}}(\Omega)$. To derive the lower bound for the local slope, we wish to compute the local slope for the point $y$ along trajectories $s \mapsto y - s \bar w$ as long as $\bar w \neq 0 $. Since $\bar w$ does not enjoy sufficient regularity properties, we introduce for any $\gamma>0$ some $w_\gamma \in C_c^\infty(\Omega;\R^d)$ such that $\Vert \bar w - w_\gamma \Vert_{W^{1,{\tilde p}}(\Omega)} \leq \gamma$ and consider $w_s \defas y - s w_\gamma$.  Without loss of generality one can find some $s'>0$ such that for all $s < s'$ we have $w_s \in \mathscr{S}_{M}$. (Otherwise, we consider the space $\mathscr{S}_{\eps,M'}$ for $M' >M$.)   Thus, a Taylor expansion and convexity of $P$ imply that
\begin{align*}
\phi(w_s) - \phi(y) &\geq \int_\Omega \partial_F W(\nabla y): (\nabla w_s - \nabla y ) \di x \\ &\quad - C \Vert \nabla w_s - \nabla y \Vert_{L^2(\Omega)}^2  + \int_\Omega \partial_G P(\nabla^2 y): (\nabla^2 w_s - \nabla^2 y ) \di x\\
 &= s \int_\Omega \partial_F W(\nabla y): \nabla w_\gamma  +\partial_G P(\nabla^2 y): \nabla^2 w_\gamma \di x + O(s^2) 
\end{align*}
On the other hand, \eqref{inequalitytildep} yields
\begin{align*}
 \mathcal{D}(y,w_s)^{\tilde p} 
&=\int_\Omega  \vert A(x) (\nabla y)^T \nabla y - A(x)(\nabla  y - s \nabla w_\gamma)^T (\nabla  y - s \nabla w_\gamma)  \vert^{\tilde p} \di x \\
&=\int_\Omega  \vert s A(x)\left( (\nabla  y)^T \nabla w_\gamma + (\nabla w_\gamma)^T \nabla y \right) - s^2 A(x) (\nabla w_\gamma)^T  \nabla w_\gamma \vert^{\tilde p} \di x \\
& \leq s^{\tilde p} \int_\Omega  \vert A(x) (\nabla  y)^T \nabla w_\gamma + A(x)(\nabla w_\gamma)^T \nabla y  \vert^{\tilde p} \di x + O(s^{2{\tilde p}-1 }+ s^{\tilde p +1}).
\end{align*}
Since $\mathcal{D}(y,w_s) \to 0$ as $s \to 0$, by integration by parts we derive 
\begin{align*}
\vert \partial \phi \vert_{\mathcal{D}}(y) \geq \limsup\limits_{s \to 0} \frac{\left(\phi(y) - \phi(w_s) \right)^+}{\mathcal{D}(y,w_s)} \geq \frac{\left( \int_\Omega \partial_F W(\nabla y): \nabla w_\gamma  + \mathcal{L}_P(\nabla^2 y): \nabla w_\gamma \di x \right)^+ }{\left( \int_\Omega  \vert A(x) (\nabla  y)^T \nabla w_\gamma + A(x) (\nabla w_\gamma)^T \nabla y  \vert^{\tilde p} \di x\right)^{1/{\tilde p}}} \eqcolon \Phi(w_\gamma),
\end{align*}
where $\Phi$ is a function defined for every   $0 \neq w \in W_0^{1,{\tilde p}}(\Omega)$.  
Thus, we derive by using  \eqref{relationRandderivative} and \eqref{eulerlagrangefordifferentialop}
\begin{align*}
\Phi(\bar w) - \Phi(w_\gamma) + \vert \partial \phi \vert_{\mathcal{D}}(y) \geq \Phi(\bar w) &=  \frac{\left( \int_\Omega \partial_F W(\nabla y): \nabla \bar w + \mathcal{L}_P(\nabla^2 y): \nabla \bar w \di x \right)^+ }{\left( \int_\Omega  \vert  A(x)(\nabla  y)^T \nabla \bar w+ A(x)(\nabla \bar w)^T \nabla y  \vert^{\tilde p} \di x\right)^{1/{\tilde p}}}\\
&= \left( \int_\Omega  \vert  A(x)(\nabla  y)^T \nabla \bar w+ A(x)(\nabla \bar w)^T \nabla y  \vert^{\tilde p} \di x\right)^{1-1/{\tilde p}}.
\end{align*}
 As $\Phi(\bar w ) - \Phi(w_\gamma) \to 0$ for $\gamma \to 0$, we find
\begin{align}\label{zwischenschrittlower}
\vert \partial \phi \vert_{\mathcal{D}}(y) \geq \left( \int_\Omega  \vert  A(x)(\nabla  y)^T \nabla \bar w+ A(x)(\nabla \bar w)^T \nabla y  \vert^{\tilde p} \di x\right)^{1-1/{\tilde p}},
\end{align}
which is the desired lower bound due to \eqref{relationRandderivative}.
According to the definition of the local slope we have $\vert \partial \phi \vert_{\mathcal{D}}(y) \geq 0$ and thus the lower bound also makes sense if $\bar w = 0$.

Now assume that $ \diver \mathcal{L}_P(\nabla^2 y)  \notin W^{-1,{\tilde p/ (\tilde p -1 )}}(\Omega)$.
Let $(y_n)_n$ be a sequence of smooth functions converging strongly to $y$ in $W^{2,p}(\Omega)$.
Let $\bar w_n \in W^{1,{\tilde p}}_0(\Omega)$ be the solutions 
to \eqref{minimization problem} corresponding to $y_n$.
We first show that $(\bar w_n)_n$ is not bounded in $L^{\tilde p }(\Omega)$.
 Indeed, otherwise
 we get by  \eqref{identitytensor}, \ref{H_bounds}
\eqref{eulerlagrangefordifferentialop},   \eqref{explicitex},  \eqref{aprioribounds}, \ref{W_regularity} and Hölder's inequality that
\begin{align*}
\left\vert \int_\Omega \mathcal{L}_P(\nabla^2 y)   : \nabla \varphi  \di x \right\vert &= \left\vert \int_\Omega \partial_G P(\nabla^2 y)   : \nabla^2 \varphi \di x \right\vert = \lim\limits_{n \to \infty}  \left\vert \int_\Omega \partial_G P(\nabla^2 y_n)   : \nabla^2 \varphi \di x \right\vert \\
 &=\lim\limits_{n \to \infty}  \left\vert \int_\Omega ( \partial_{\dot{F}}R(x,\nabla y_n, \nabla \bar w_n)  - \partial_F W(\nabla y_n)  ) : \nabla \varphi \di x  \right\vert \leq C \Vert \nabla \varphi \Vert_{L^{\tilde p }(\Omega) }  .
\end{align*}
for all $\varphi \in W^{2,p}_0(\Omega)$. This, however, contradicts $\diver \mathcal{L}_P(\nabla^2 y)   \notin W^{-1,{\tilde p/ (\tilde p -1 )}}(\Omega)$. 

Fix $\eps>0$.
Due to Lemma~\ref{lem:stronguppergradient3d}(i) we find $\kappa>0$ such that
for every $y_n$ there exists $w_n \neq y$ with  $  \Vert w_n - y_n \Vert_{L^\infty(\Omega)} < \kappa/2 $ such that
\begin{align*}
\vert \partial \phi \vert_{\mathcal{D}}(y_n) - \eps \leq   \frac{ \left(\phi(y_n) - \phi(w_n)  + \tfrac{1}{2}    \lambda     \mathcal{D}(y_n,w_n)^{\tilde p}    \right)^+}{\mathcal{D}(y_n,w_n) (1 + C \Vert \nabla w_n - \nabla y_n \Vert_{L^\infty(\Omega) }^{\tilde p-1} + C \Vert \nabla w_n - \nabla y_n \Vert_{L^\infty(\Omega) })^{1/\tilde p} }.
\end{align*}
Since $\phi$ and $\mathcal{D}$ are continuous with respect to strong convergence in $W^{2,p}(\Omega)$, Lemma~\ref{lem:stronguppergradient3d}(i) implies for $n \in \N$ large enough that
\begin{align}
\vert \partial \phi \vert_{\mathcal{D}}(y_n) - \eps \leq   \vert \partial \phi \vert_{\mathcal{D}}(y) + \eps. \label{help1}
\end{align}
Since $y_n$ is smooth we particularly derive the bound \eqref{zwischenschrittlower} for $y_n$ with $\bar w$ replaced by $\bar w_n$. Since $(\bar w_n)_n$ is not bounded in $L^{\tilde p}(\Omega)$, the left-hand side of \eqref{help1} tends to $+\infty$, due to \eqref{Kornineqlinear}. This concludes the proof in the case $ \diver \mathcal{L}_P(\nabla^2 y)  \notin W^{-1,{\tilde p/ (\tilde p -1 )}}(\Omega)$.

\emph{Step 2 (Upper bound):} We now show the reverse inequality. 
Assume again that $ \diver \mathcal{L}_P(\nabla^2 y)  \in W^{-1,{\tilde p/ (\tilde p -1 )}}(\Omega)$.
Given $F_0,F_1 \in \R^{d \times d}$, \eqref{linearizethemetric}, \eqref{inequalitytildep}, the triangle inequality, and \ref{D_bound} imply that
\begin{align*}
 \vert A(x) \left(F_1-F_0\right)^{T} F_0 &+ A(x) F_0^{T}\left(F_1-F_0\right) \vert^{\tilde p}  \leq \left( \vert A(x) F_1^{T} F_1- A(x) F_0^{T} F_0\vert + \vert F_1-F_0 \vert^2 \right)^{\tilde p}   \\
 & \leq  \vert A(x) F_1^{T} F_1- A(x) F_0^{T} F_0\vert^{\tilde p} +  \vert F_1-F_0  \vert^{2\tilde p} \\
 & \qquad + C \vert F_1^{T} F_1-  F_0^{T} F_0\vert^{\tilde p-1} \vert F_1-F_0 \vert^2 + C \vert F_1^{T} F_1-  F_0^{T} F_0\vert \vert F_1-F_0 \vert^{2 (\tilde p -1)}\\
 & \leq  \vert A(x) F_1^{T} F_1- A(x) F_0^{T} F_0\vert^{\tilde p} +  \vert F_1-F_0  \vert^{2\tilde p} \\
 & \qquad + C ( \vert F_1 \vert + \vert F_0 \vert )^{\tilde p -1} \vert F_1-F_0 \vert^{\tilde p +1} + C ( \vert F_1 \vert + \vert F_0 \vert ) \vert F_1-F_0 \vert^{2 \tilde p -1} .
	\end{align*}
Using the previous inequalities for $F_1 = \nabla w$ and $F_0 = \nabla y$, by \eqref{aprioribounds} we find that
\begin{align}\label{helpsbelow}
&\int_\Omega  \vert  A(x)(\nabla  y)^T (\nabla y - \nabla w) +  A(x)(\nabla y - \nabla w) ^T \nabla y  \vert^{\tilde p} \di x \leq  \mathcal{D}(y,w)^{\tilde p}  +  C \int_\Omega \vert \nabla w - \nabla y \vert^{  {\tilde p}+1 \wedge {2\tilde p}-1 }  \di x ,
\end{align}
where $\wedge$ denotes the minimum function.
  A Taylor expansion  and the convexity of $P$ yield
\begin{align*}
& \quad \ \vert \partial \phi \vert_{\mathcal{D}}(y)  \leq \limsup\limits_{w \to y} \frac{\left(\phi(y) - \phi(w) \right)^+}{\mathcal{D}(y,w)} \\
& \leq \limsup\limits_{w \to y} \frac{\left(\int_\Omega \partial_F W(\nabla y): ( \nabla y - \nabla w) + \partial_G P(\nabla^2 y) : (\nabla^2 y - \nabla^2 w ) \di x\right)^+  }{ \left(  \int_\Omega  \vert A(x)  (\nabla  y)^T (\nabla y - \nabla w) + A(x) (\nabla y - \nabla w) ^T \nabla y  \vert^{\tilde p} \di x -  C \int_\Omega \vert \nabla w - \nabla y \vert^{  {\tilde p}+1 \wedge {2\tilde p}-1 }  \di x \right)^{1/{\tilde p}}  } \\
& \qquad + \limsup\limits_{w \to y}    \frac{  C \Vert \nabla y - \nabla w \Vert_{L^\infty(\Omega) } \int_\Omega \vert \nabla y - \nabla w \vert \di x}{ \mathcal{D}(y,w)} .
\end{align*}
 Lemma~\ref{th: metric space}(ii) and (iii) imply that the second term vanishes.
%
In view of \eqref{eulerlagrangefordifferentialop}, the first term can be estimated by using \eqref{identitytensor}, Hölder's inequality, and \eqref{explicit2}, i.e.,
\begin{align*}
&\int_\Omega \partial_F W(\nabla y): ( \nabla y - \nabla w) + \partial_G P(\nabla^2 y) : (\nabla^2 y - \nabla^2 w ) \di x = \int_\Omega \partial_{\dot{F}}R(x,\nabla y, \nabla \bar w) : (\nabla y - \nabla w ) \di x \\
&\qquad\leq \left( \int_\Omega  \vert A(x) (\nabla  y)^T (  \nabla \bar w) + A(x) (\nabla \bar w) ^T \nabla y  \vert^{\tilde p} \di x \right)^{(\tilde p-1) /\tilde p} \\ &\qquad \qquad \qquad \qquad \qquad \cdot \left( \int_\Omega  \vert A(x) (\nabla  y)^T (\nabla y - \nabla w) +  A(x)(\nabla y - \nabla w) ^T \nabla y  \vert^{\tilde p} \di x  \right)^{1/\tilde p}.
\end{align*}
As $\tilde p \in (1,+\infty)$ yields $ \frac{{\tilde p}+1 \wedge {2\tilde p}-1 }{\tilde p} > 1$, we can use \eqref{Kornineqlinear} and \ref{D_bound} for the bound of the remainder. By taking the limit, this shows the upper bound in the case $\tilde w \neq 0$.
The right-hand side is equal to $0$ if $\bar w = 0$.
Concluding, we derive the upper bound
\begin{align}\label{zwischenschrittupper}
\vert \partial \phi \vert_{\mathcal{D}}(y) \leq  \left( \int_\Omega  \vert  A(x)(\nabla  y)^T \nabla \bar w+ A(x)(\nabla \bar w)^T \nabla y  \vert^{\tilde p} \di x\right)^{1-1/{\tilde p}}.
\end{align}
Recalling \eqref{relationRandderivative}, this concludes the proof.
\end{proof}

In the last part of this section, we address the existence of weak solutions and prove Theorem~\ref{maintheorem1}.

\begin{proof}[Proof of Theorem~\ref{maintheorem1}]

We divide the proof into four steps. First, we prove \REV (i), \END i.e., the existence of a curve of maximal slope $y \colon [0,\infty) \to \mathscr{S}_{M}$ for $\phi$ with respect to $\vert \partial \phi \vert_{\mathcal{D}}$. Then, we  prove the regularity stated in \REV Definition~\ref{def:weakformulation} (Step 2). \END Step 3 consists in deriving sharp estimates for $\vert y' \vert_{\mathcal{D}}$ and $\frac{{\rm d}}{{\rm d} t}\phi \circ y$. This allows to relate the curve to the system of equations in Step 4.

\noindent \textit{Step 1:}
We first prove (i). Our goal is to employ Theorem~\ref{th:abstract convergence 2} for the metric space $(\mathscr{S}_{M}, \mathcal{D})$, elastic energy $\phi$ and the weak $W^{2,p}(\Omega)$-topology, denoted by $\sigma$ in this result. Let $(\tau_k)_k$ be a sequence of time-steps such that $\tau_k \to 0$ as $k \to \infty$.
 The existence of time-discrete solutions, as in \eqref{time-discretescheme},
 is guaranteed by the direct method of the Calculus of Variations.  Indeed, let $Y_{\tau_k}^{n-1}$, $n \geq 1$ be given. Then, the coercivity of $\mathbf{\Phi}(\tau_k, Y_{\tau_k}^{n-1},\cdot)$ with respect to the weak $W^{2,p}(\Omega;\R^d)$-topology is a consequence of \eqref{aprioribounds}. Lower semicontinuity follows from Lemma~\ref{th: metric space}(iv) and (v).
 Lemma~\ref{th: metric space}(i) ensures that $(\mathscr{S}_{M}, \mathcal{D})$ is a complete metric space. Moreover, \eqref{compatibility}--\eqref{eq: implication} follow by
 Lemma~\ref{th: metric space}(ii),(iv),(v) and
  Lemma~\ref{lem:stronguppergradient3d}(ii).
Due to Lemma~\ref{lem:stronguppergradient3d}(iii),  the local slope $\vert \partial \phi \vert_{\mathcal{D}}$ is a strong upper gradient for $\phi$.
Thus,  Theorem~\ref{th:abstract convergence 2} implies that the time-discrete approximations converge to a $\tilde p$-curve of maximal slope  $y \colon [0,\infty) \to \mathscr{S}_{M}$   for $\phi$ with respect to $\vert \partial \phi \vert_{\mathcal{D}}$.  
In particular, the curve of maximal slope satisfies the balance
	\begin{align}\label{energybalanceused}
	\frac{1}{\tilde p} \int_0^T |y'|_{\mathcal{D}}^{\tilde p}(t) \, {\rm d} t + \frac{\tilde p - 1}{\tilde p} \int_0^T |\partial \phi|_{\mathcal{D}}^{\tilde p /(\tilde p -1)}(y(t)) \, {\rm d} t + \phi(y(T)) = \phi(y_0) \ \  \ \ \ \forall \,  T>0.
	\end{align} 

\noindent \textit{Step 2:}   
As $y$ is a curve of maximal slope, we get that $\phi(y(t))$ is decreasing in time, see \eqref{energybalanceused}. This yields together with \eqref{aprioribounds} that 
\begin{align}
y \in L^\infty\big([0,\infty); W^{2,p}(\Omega;\R^d) \big). \label{boundedlimiting}
\end{align}  
Moreover, \eqref{energybalanceused} implies that $\vert y'\vert_{\mathcal{D}} \in L^{\tilde p}([0,\infty))$.  
As $\mathcal{D}$ is equivalent to the $W^{1,\tilde p}(\Omega)$-norm, see \eqref{equivalence},
we observe that
$y$ is an absolutely continuous curve with respect to $W^{1,\tilde p}(\Omega)$. 
Thus, by \cite[Remark~1.1.3]{AGS} and \eqref{energybalanceused} we observe that $y$ is differentiable for a.e.\ $t$ and we have $ y \in W^{1,\tilde p}([0,\infty) \times \Omega ;\R^d)$. More precisely, for all $0 \leq s <t$, and almost everywhere in $\Omega$   it holds that 
\begin{align}\label{lemma:fundamental}
y(t)-y(s) = \int_s^t \partial_t y(r) \,  {\rm d}r    .
\end{align}

\noindent \textit{Step 3:}  We now use a standard technique to relate curves of maximal slope to PDEs in Hilbert spaces, see \cite[Section 1.4]{AGS}. More precisely, the proof follows the lines of \cite[Theorem 2.2(ii)]{MFMKDimension}. For the sake of convenience, we again neglect $f$. This term  can be included  by standard adaptions.
Using Fatou's lemma, we can proceed as in \eqref{helpsbelow}, and we find for a.e.~$t$ that
\begin{align}\label{metricderivativeest}
|y'|_{\mathcal{D}}(t) = \lim_{s \to t} \frac{\mathcal{D}(y(s),y(t))}{|s-t|} \geq \left(\int_\Omega {\tilde p} R(\nabla y(t), \nabla \partial_t y (t)) \di x \right)^{1/{\tilde p}}.
\end{align}
We now determine the derivative $\frac{{\rm d}}{{\rm d} t} \phi(y(t))$. Due to \eqref{energybalanceused}, $|y'|_{\mathcal{D}}(t)$ and $|\partial \phi|_{\mathcal{D}} (y(t))$ are finite for a.e.~$t $. Fix such a $t \in [0,\infty)$. Then, Lemma~\ref{finerepresentation} implies that 
$\diver \mathcal{L}_ P(\nabla^2 y(t)) \in W^{-1,{\tilde p/ (\tilde p -1 )}}(\Omega)$.
Using the convexity of $P$, integration by parts, and a Taylor expansion, we deduce together with $1<\tilde p $, \eqref{aprioribounds}, and \eqref{equivalence} that 
\begin{align*}
 \frac{{\rm d}}{{\rm d} t} \phi(y(t)) & \geq \liminf\limits_{s \to t} \frac{1}{s-t} \int_\Omega \left( \partial_F W(\nabla y(t)): ( \nabla y(s) - \nabla y(t)) - \diver \partial_G P(\nabla^2 y(t)) : (\nabla y(s) - \nabla y(t))\right) \di x \\
 & \qquad - C \limsup\limits_{s \to t} \frac{  \mathcal{D} (y(s),y(t) )^{\tilde p} } { \vert s - t\vert^{\tilde p} } \vert s - t \vert^{\tilde p -1} .
\end{align*}
Let $\bar w(t)$ be the functions defined in Lemma~\ref{finerepresentation}.
Shortly writing \begin{align*}
A_{\bar w} &\defas A(x) (\nabla   \bar w(t))^T \nabla y(t) + A(x)(\nabla y(t))^T  \nabla  \bar w(t) \qquad \text{and} \qquad \\ A_{\partial_t y} &\defas A(x) (\partial_t  \nabla y(t))^T \nabla y(t) + A(x) (\nabla y(t))^T \partial_t  \nabla y(t),
\end{align*}  we find by \eqref{eulerlagrangefordifferentialop} and \eqref{explicit2} that
\begin{align}\label{derivativeenergyest}
\frac{{\rm d}}{{\rm d} t} \phi(y(t)) \geq \int_\Omega
\vert  A_{\bar w} \vert^{{\tilde p}-2}  A_{\bar w}:   A_{\partial_t y} 
 \di x.
\end{align}

\noindent \textit{Step 4:}   
Combining  \eqref{derivativeenergyest}, Young's inequality with powers $\tilde p / (\tilde p -1)$ and $\tilde p$, \eqref{finerepresentationformula}, \eqref{relationRandderivative}, and \eqref{metricderivativeest} implies that
\begin{align*}
 \frac{{\rm d}}{{\rm d} t} \phi(y(t)) 
  \geq - \int_\Omega \left( \frac{{\tilde p}-1}{{\tilde p}} \vert  A_{\bar w} \vert^{{\tilde p} } + {\tilde p}^{-1} \vert A_{\partial_t y} \vert^{\tilde p} \right) \di x   \geq - \frac{{\tilde p}-1}{{\tilde p}} \vert \partial \phi \vert_{\mathcal{D}}^{{\tilde p}/({\tilde p}-1)}(y(t))  - \frac{1}{{\tilde p}} |y'|_{\mathcal{D}}^{\tilde p}(t).
\end{align*}  
Since $y$ is a $\tilde p$-curve of maximal slope, see Definition~\ref{main def2}(iii), all inequalities are in fact equalities, and thus it holds that
\begin{align*}
\vert A_{\partial_t y} \vert^{\tilde p} = \vert  A_{\bar w} \vert^{{\tilde p}}  \qquad \text{ and } \qquad    A_{\bar w}:   A_{\partial_t y} = - \vert A_{\bar w}  \vert \vert A_{\partial_t y} \vert
\end{align*} 
a.e.~in $\Omega$. This gives $ \vert A_{\bar w} +  A_{\partial_t y} \vert^2 = 0$, and therefore multiplying $ A_{\partial_t y} = - A_{\bar w}$ with $2(\nabla y(t)) \vert A_{\partial_t y} \vert^{\tilde p -2}= 2(\nabla y(t)) \vert A_{\bar w} \vert^{\tilde p -2}$ from the left, taking the scalar product with $A(x) ( (\nabla \varphi) \nabla y + ( \nabla y )^T \nabla \varphi )$ for $\varphi \in W_0^{2,p} (\Omega;\R^d) \subset W_0^{1,\tilde p} (\Omega;\R^d)$, and using \eqref{explicit2} we get that
\begin{align*}
\partial_{\dot{F}}R(x,\nabla y(t), \partial_t  \nabla y(t)) : \nabla \varphi =
-\partial_{\dot{F}}R(x,\nabla y(t), \nabla   \bar w(t)) : \nabla \varphi
\end{align*}
a.e.~in $\Omega$.
Eventually, we can conclude \eqref{weaksolutiondef} by an integration of the latter identity, \eqref{eulerlagrangefordifferentialop}, and integration by parts.
\end{proof}

\begin{rem}[Energy balance]\label{explicitexpression}
In the proof of Theorem~\ref{maintheorem1}(ii) we have seen that the sharp lower bound on the metric derivative in \eqref{metricderivativeest} is actually an equality and that $|y'|_{\mathcal{D}}^{\tilde p}(t) = |\partial \phi|_{\mathcal{D}}^{\tilde p /(\tilde p -1)}(y(t))$ for a.e.~$t$. 
Thus, the energy-dissipation-balance in \eqref{energybalanceused} implies that a $\tilde p$-curve of maximal slope $y$ satisfies
 
\begin{align*}
\tilde p \int_0^T \int_\Omega  R(\nabla y(t), \nabla \partial_t y (t)) \di x \di t = \phi( y_0) - \phi( y(T) )
\end{align*}
for $T >0$.

\end{rem}

\section{Long-time behavior for small strains}\label{sec:longtime}

This section is devoted to the proof of Theorem~\ref{maintheorem3}, which is inspired by \cite[Theorem~2.4.15]{AGS}.

\begin{proof}[Proof of Theorem~\ref{maintheorem3}]
  The uniqueness of a minimizer of $\phi$ follows from the strict convexity in \linebreak Lemma~\ref{lem:localconv}(ii), concluding the proof of (i). 
  Before proving (ii), (iii), and (iv), we provide a preliminary step addressing properties related to the local slope $\vert \partial \phi \vert_{\mathcal{D}}(y^\delta(t))$.
  To this end, we evaluate the local slope at $y^\delta(t)$ and compare the supremum in \REVB Lemma~\ref{lem:repsmallstrain} \END with the minimizer $y^\delta_\infty$.
This implies together with \eqref{aprioribounds} that
\begin{align}\label{ineq:slopecompare}
&\phi(y^\delta(t)) - \phi(y^\delta_\infty)  + \tfrac{1}{2}    \lambda     \int_\Omega \vert \nabla y^\delta(t) - \nabla  y^\delta_\infty \vert^2 \di x   \notag \\ &\qquad   \leq \mathcal{D}(y^\delta(t),y^\delta_\infty) (1 + C \Vert \nabla y^\delta_\infty - \nabla y^\delta(t) \Vert_{L^\infty(\Omega) }^{\tilde p-1} + C \Vert \nabla y^\delta_\infty - \nabla y^\delta(t) \Vert_{L^\infty(\Omega) })^{1/\tilde p}  \vert \partial \phi \vert_{\mathcal{D}}(y^\delta(t)) \notag  \\
&\qquad   \leq C\mathcal{D}(y^\delta(t),y^\delta_\infty)    \vert \partial \phi \vert_{\mathcal{D}}(y^\delta(t)) 
\end{align}
Since $y^\delta$ is a ${\tilde p}$-curve of maximal slope with respect to the strong upper gradient $\vert \partial \phi \vert_{\mathcal{D}}$, we obtain that $\vert \partial \phi \vert_{\mathcal{D}}(y^\delta(t)) \to 0$ as $t \to +\infty$. Since the left-hand side of \eqref{ineq:slopecompare} is nonnegative due to the minimality of $y^\delta_\infty$ and the fact that $\lambda>0$, we obtain $\phi(y^\delta(t)) \to \phi(y^\delta_\infty) $ as $t \to \infty$. Thus, \REV $y^\delta(t) \rightharpoonup y^\delta_\infty$ weakly in $W^{2,p}(\Omega)$ as $t \to \infty$, \END see Lemma~\ref{th: metric space}(ii). 
Moreover, as $y^\delta$ is a ${\tilde p}$-curve of maximal slope with respect to the strong upper gradient $\vert \partial \phi \vert_{\mathcal{D}}$, we get for a.e.~$t\in [0,T]$ the equality
\begin{align*}
\vert \partial \phi \vert_{\mathcal{D}}(y^\delta(t)) \vert (y^{\delta})^{\prime}\vert_{\mathcal{D}}(t) = \frac{\tilde p -1}{\tilde p} \vert \partial \phi \vert_{\mathcal{D}}^{\tilde p / (\tilde p -1)}(y^\delta(t)) + \frac{1}{\tilde p}  \vert (y^{\delta})^{\prime}\vert^{\tilde p}_{\mathcal{D}}(t).
\end{align*} Thus, we have an equality in Young's inequality, which implies together with \eqref{maximalslope} that
\begin{align}\label{eq:derivativeenergy}
- \frac{\di}{\di t} \phi (y^\delta(t)) = \vert \partial \phi \vert_{\mathcal{D}}^{\tilde p / (\tilde p -1)}(y^\delta(t)).
\end{align}

Eventually, we address the long-time behavior.
First, we set $\tilde p = 2$.
By \eqref{ineq:slopecompare},  \eqref{equivalence},  Young's inequality with power $2$, and the positivity of $\lambda$ for $\delta$ sufficiently small it holds that  
\begin{align}
\phi(y^\delta(t)) - \phi(y^\delta_\infty)   
\leq    C \vert \partial \phi \vert_{\mathcal{D}}^{2}(y^\delta(t))    . \label{estimateslope3}
\end{align}
In view of \eqref{eq:derivativeenergy},  we thus get
\begin{align*}
- C  \left( \phi(y^\delta(t)) - \phi(y^\delta_\infty)  \right) 
&\geq  \frac{\di}{\di t} \left( \phi (y^\delta(t)) - \phi (y^\delta_\infty) \right).
\end{align*}
Thanks to Gronwall's inequality, we find
\begin{align}\label{conclusiongronwall}
 \phi (y^\delta(t)) - \phi (y^\delta_\infty)\leq \exp (- C   t) \left(\phi (y^\delta(0)) - \phi (y^\delta_\infty) \right).
\end{align} 
Eventually, combining Remark~\ref{explicitexpression} with \eqref{conclusiongronwall} leads to
\begin{align*}
2 \int_0^T \int_\Omega   R(\nabla y(t), \nabla \partial_t y (t)) \di x \di t \geq (1-\exp(-CT)) \big( \phi( y^\delta_0) - \phi(y^\delta_\infty)\big) . 
\end{align*}
This concludes the proof of (ii).

Let now $\tilde p \in (1,2)$. By \REV Jensen's \END inequality, \eqref{ineq:slopecompare},  \eqref{equivalence},  Young's inequality with power $2$, and the positivity of $\lambda$ for $\delta$ sufficiently small we again see that
\eqref{estimateslope3} holds. Thus, \eqref{eq:derivativeenergy} and \eqref{estimateslope3} yield
\begin{align*}
	- C \left( \phi(y^\delta(t)) - \phi(y^\delta_\infty)   \right)^{ \tilde p/(2\tilde p -2)}
\geq    \frac{\di}{\di t} \left( \phi(y^\delta(t)) - \phi(y^\delta_\infty)   \right).
\end{align*}
To solve this differential inequality, we set $f(t) \defas \phi (y^\delta(t)) - \phi (y^\delta_\infty)$ and observe that $f$
is nonnegative and decreasing. Moreover, let $s \defas \tilde p/(2\tilde p -2)>1$. Using integration by substitution,
 we find that
\begin{align}\label{differentialineq}
	\frac{1}{1-s} f(t)^{1-s} - \frac{1}{1-s} f(t_0)^{1-s} = \int_{f(t_0)}^{f(t)} x^{-s} \di x = \int_{t_0}^{t} f(r)^{-s} f'(r) \di r  \leq - C (t- t_0)
\end{align}
for all $t \geq t_0 \geq 0$.
By taking the $(s-1)$th root, we find
\begin{align}\label{solution}
	f(t) \leq   \left(   f(t_0)^{1-s} - C (1-s) (t-t_0) \right)^{\frac{1}{1-s}}.
\end{align}
Setting $t_0 = 0$, this shows (iii).

Let now $\tilde p >2$. By the Gagliardo–Nirenberg interpolation inequality we find
\begin{align}
\left(\int_\Omega \vert \nabla y^\delta(t) - \nabla y^\delta_\infty \vert^{\tilde p} \di x  \right)^{1/\tilde p} & \leq C \Vert  \nabla^2 y^\delta(t) - \nabla^2 y^\delta_\infty \Vert_{L^p(\Omega)}^\alpha \Vert \nabla y^\delta(t) - \nabla y^\delta_\infty  \Vert_{L^2(\Omega)}^{1-\alpha} \notag \\ &\quad + C\Vert \nabla y^\delta(t) - \nabla y^\delta_\infty  \Vert_{L^2(\Omega)}, \label{Help2}
\end{align}
where
\begin{align*}
\frac{1}{\tilde p} = \alpha \left(\frac{1}{p} - \frac{1}{d} \right) + (1-\alpha) \frac{1}{2}.
\end{align*}
As $p>d$, rearranging yields
\begin{align}
1-\alpha = \frac{\frac{1}{\tilde p} - \frac{1}{p} + \frac{1}{d}}{\frac{1}{2} - \frac{1}{p} + \frac{1}{d}} > \frac{\frac{1}{\tilde p}}{\frac{1}{2}} = \frac{2}{\tilde p}, 	\qquad \frac{2}{1-\alpha} < \tilde p, \qquad 2 > \frac{2}{1+\alpha} > \frac{\tilde p}{\tilde p -1}. \label{exponentineq}
\end{align}
By \eqref{ineq:slopecompare}, \eqref{equivalence}, \eqref{Help2}, Young's inequality with powers $\frac{2}{1-\alpha}$ and $\frac{2}{1+\alpha}$ and power $2$, and the fact that $\vert \partial \phi \vert_{\mathcal{D}}(y^\delta(t)) \to 0$ for $t \to \infty$ there exists $t_0>0$ such that
\begin{align*}
\phi(y^\delta(t)) - \phi(y^\delta_\infty)    \leq C \vert \partial \phi \vert_{\mathcal{D}}(y^\delta(t))^{2/(1+\alpha)}  + C \vert \partial \phi \vert_{\mathcal{D}}(y^\delta(t))^2  \leq 2C \vert \partial \phi \vert_{\mathcal{D}}(y^\delta(t))^{2/(1+\alpha)}
\end{align*}
for all $t \geq t_0$.
In view of \eqref{eq:derivativeenergy},  we thus get
\begin{align*}
- C  \left( \phi(y^\delta(t)) - \phi(y^\delta_\infty)  \right)^{\frac{\tilde p}{\tilde p -1} \frac{1+\alpha}{2}}
\geq  \frac{\di}{\di t} \left( \phi (y^\delta(t)) - \phi (y^\delta_\infty) \right).
\end{align*}
Due to \eqref{exponentineq}, the exponent $s \defas \frac{\tilde p}{\tilde p -1} \frac{1+\alpha}{2}$ on the left-hand side is smaller than $1$.
Following the lines of \eqref{differentialineq}, we can solve this differential inequality, and
take the $(1-s)$th root, implying that \eqref{solution} holds,
as long as the base on the right-hand side is positive.
Since the left-hand side is positive, there exists a time $T_{\rm ext}>0$ such that $f(T_{\rm ext}) = 0$. As $\phi( y^\delta( \REV \cdot \END ))$ is decreasing, we have $\phi (y^\delta(t)) = \phi (y^\delta_\infty)$ for all $t \geq T_{\rm ext}$. Finally, we conclude the proof using (i).
%
%
\end{proof}

\begin{rem}\label{remarknotsharp}
	To obtain the decay estimate for $\tilde p > 2$, we benefit from the integrability of the second gradient: by using the Gagliardo-Nirenberg interpolation inequality in \eqref{Help2}, we can control the $L^{\tilde p}(\Omega)$-norm by means of the $L^2(\Omega)$-norm with a suboptimal scaling. 
	Thus, we \REV do \END not match the heuristic scaling with $s = \frac{\tilde p}{2 \tilde p -2}$ in \eqref{solution}. Still, the exponent is sufficiently small, showing that equilibrium is reached in finite time.
\end{rem}

\section*{Acknowledgements} 
This work was funded by  the DFG project FR 4083/5-1 and  by the Deutsche Forschungsgemeinschaft (DFG, German Research Foundation) under Germany's Excellence Strategy EXC 2044 -390685587, Mathematics M\"unster: Dynamics--Geometry--Structure. 
The work was further supported by the DAAD project 57600633 / DAAD-22-03 and by the Deutsche Forschungsgemeinschaft (DFG, German Research Foundation) under Germany’s Excellence Strategy – EXC-2047/1 – 39068581.

 \typeout{References}


\begin{thebibliography}{10}
	%
	%
	%
	%
	%
	%
	%
	%
	%
	
	
	
	
	
	
 
	\bibitem{AdamsFournier:05}
	{\textsc{R.~A.~Adams, J.~J.~F.~Fournier:}}
	\newblock {Sobolev Spaces}. 
	\newblock Pure and Applied Mathematics (Amsterdam), Second edition,
	\newblock \textit{Elsevier/Academic Press, Amsterdam}  (2003)
	
	
	 
	\bibitem{thermoelasto}
	\textsc{S.~Almi, R.~Badal, M.~Friedrich, S.~Schwarzacher:}
	\newblock Thermo-elastodynamics of nonlinearly viscous solids. 
	\newblock \textit{Preprint:} https://arxiv.org/abs/2409.01229 (2024) 
	
	
	
	\bibitem{AGS} 
	{\textsc{L.~Ambrosio, N.~Gigli, G.~Savar\'e:}}
	\newblock {Gradient Flows in Metric Spaces and in the Space
		of Probability Measures}. 
	\newblock Lectures Math.\ ETH Z\"urich, First edition,
	\newblock \textit{Birkh\"auser Basel} (2005)
	
		
	\bibitem{AndrewsBall}
	\textsc{G.~Andrews, J.~M.~Ball:}
	\newblock Asymptotic behaviour and changes of phase in one-dimensional nonlinear
	viscoelasticity. \newblock
	\newblock \textit{J.\ Differ.\ Equ.\ } \textbf{44}, 306--341 (1982)
	

 
	
\bibitem{Antmann04Nonlinear}
\textsc{S.~S.~Antman:}
\newblock Nonlinear problems of elasticity.
\newblock Applied Mathematical Sciences, Second edition,
\newblock \textit{Springer, New York} (2005)
	
	
	\bibitem{Antmann98Physically}
	\textsc{S.~S.~Antman:}
	\newblock Physically \newblock unacceptable \newblock viscous \newblock stresses. \newblock
	\newblock \textit{Z.\ Angew.\ Math.\ Phys.\ } \textbf{49}, 980--988 (1998)
	

	
	\bibitem{RBMFMK}
	\textsc{R.~Badal, M.~Friedrich, M.~Kru{\v{z}}{\'i}k:}
	\newblock Nonlinear and linearized models in thermoviscoelasticity.
	\newblock \textit{Arch.\ Ration.\ Mech.\ Anal.\ } \textbf{247}(1), Article number: 5 (2023)
	
	
	 
	\bibitem{positivity24}
	\textsc{R.~Badal, M.~Friedrich, M.~Kru{\v{z}}{\'i}k, L.~Machill:}
	\newblock Positivity of temperatures in large strain nonlinear thermoviscoelasticity and linearized models at positive critical temperatures. 
	\newblock \textit{J.\ Math.\ Pures Appl.}, to appear. \textit{Preprint:} https://arxiv.org/abs/2407.02035 (2024) 
	
	\bibitem{RBMFLM}
\textsc{R.~Badal, M.~Friedrich, L.~Machill:}
\newblock Derivation of a von K\'{a}rm\'{a}n plate theory for thermoviscoelastic solids.
	\REV \newblock Math.\ Models \& Methods in Appl.\ Sci.\
	\newblock \textbf{34}(14), 2749--2924 (2024) \END



\bibitem{BallCurrieOlver81Null}
\textsc{J.~M.~Ball, J.~C.~Currie, P.~L.~Olver:}
\newblock Null Lagrangians, \newblock \newblock weak continuity, \newblock \newblock and variational problems of arbitrary order.
\newblock \textit{J.\ Funct.\ Anal.\ } \textbf{41}(2), 135--174 (1981)


\bibitem{Ballsengul}
\textsc{J.~M.~Ball, Y.~\c{S}eng\"ul:}
\newblock Quasistatic Nonlinear Viscoelasticity and Gradient Flows. \newblock
\newblock \textit{J. Dyn. Diff. Equat.\ } \textbf{27}, 405--442 (2015)

 
\bibitem{Batra76Thermodynamics}
\textsc{R.~C.~Batra:}
\newblock Thermodynamics of non-simple elastic materials.
\newblock \textit{J.\ Elasticity} \textbf{6}, 451--456 (1976)

	
	

\bibitem{Schwarzacher}
\textsc{B.~Bene{\v{s}}ov{\'a}, M.~Kampschulte, S.~Schwarzacher:}
\newblock A variational approach to hyperbolic evolutions and fluid-structure interactions.
\newblock \textit{J.\ Eur.\ Math.\ Soc.\  (JEMS)},  DOI: 10.4171/JEMS/1353 (2023)


	
\bibitem{Bulicek}
\textsc{M.~Bul{\'i}{\v{c}}ek, J.~Málek, K.~R.~Rajagopal:}
\newblock On Kelvin-Voigt model and its generalizations.
\newblock \textit{Evol.\ Equ.\ and Control Theory} \textbf{1}, 17--42 (2012)


\REV 

\bibitem{Poynting}
\textsc{A.~Chiesa, M.~Kru\v{z}\'ik, U.~Stefanelli:}
Finite-strain Poynting–Thomson model: Existence and linearization.
\textit{Math.\ Mech.\ Solids} \textit{DOI:}  https://doi.org/10.1177/10812865241263788   (2024)

\END

 


	\bibitem{CiarletMardare}
	{\textsc{P.~G.~Ciarlet, C.~Mardare:}}
	\newblock {Nonlinear Korn inequalities}.
	\newblock \textit{J.\ Math.\ Pures Appl.\ }
	  {\textbf{104}}, 1119–-1134 (2015)
	
%

	
	 
	 
	 

	
	
 
	

	
	
	%
	%
	%
	%
	%
	%
	%
	%
	%
	%
	%
	%
	%
 
	%
	%
	%
	
	
	 
	%
	%
	%
	%
	%
	%
	%
	%
	
	
	 
	
	

	
	%
	
	

   
	
	
	
	
	
	
	
	
	 
	
	 
	
	 
	
\bibitem{gravina}
\textsc{A.~\v{C}e\v{s}\'{\i}k, G.~Gravina, M.~Kampschulte:} 
\newblock {Inertial evolution of non-linear viscoelastic solids in the face of (self-) collision}.
	\newblock \textit{Calc.\ Var.\ Partial Differential Equations}
	 {\textbf{63}}, Article number: 55 (2024)

\REV
	 \bibitem{demoulini} 
	 \textsc{S.~Demoulini:}
	 \newblock Weak solutions for a class of nonlinear systems of viscoelasticity.
	 \newblock \textit{Arch.\ Ration.\ Mech.\ Anal.\ } \textbf{155}, 299--334 (2000)
\END


	 \bibitem{Leonie}
	 \textsc{T.~Eiter, L.~Schmeller:}
	 \newblock Weak solutions to a model for phase separation coupled with finite-strain viscoelasticity subject to external distortion. 
	 \newblock \textit{Preprint:} https://arxiv.org/abs/2409.07066 (2024) 
	 
	 
	 
	
	
	
	\bibitem{MFMK}
	\textsc{M.~Friedrich, M.~Kru\v{z}\'ik:}
	\newblock On the passage from nonlinear to linearized viscoelasticity.
	\newblock \textit{SIAM J.\ Math.\ Anal.\ } \textbf{50}(4), 4426--4456 (2018)
	
	
	\bibitem{MFMKDimension}
	\textsc{M.~Friedrich, M.~Kru\v{z}\'ik:}
	\newblock Derivation of von Kármán plate theory in the framework of three-dimensional viscoelasticity.
	\newblock \textit{Arch.~Ration.~Mech.~Anal.\ } \textbf{238}(1), 489--540 (2020)
	
 
  
 	
 

	\bibitem{MFLMDimension2D1D}
	{\textsc{M.~Friedrich, L.~Machill:}}
  \newblock Derivation of a one-dimensional von K\'{a}rm\'{a}n theory for viscoelastic ribbons.
  \newblock \textit{NoDEA Nonlinear Differential Equations Appl.\ }
	{\textbf{29}}, Article number: 11 (2022)
	 
 
	\bibitem{MFLMDimension3D1D}
	{\textsc{M.~Friedrich, L.~Machill:}}
	\newblock {One-dimensional viscoelastic von K{\'{a}}rm{\'{a}}n theories derived from nonlinear thin-walled beams}.
		\newblock \textit{Calc.\ Var.\ Partial Differential Equations}
		  {\textbf{62}}(7), Article number: 190 (2023)
	

	
	
	
	
	\bibitem{FrieseckeJamesMueller:02}
	\textsc{G.~Friesecke, R.~D.~James, S.~Müller:}
	\newblock {A theorem on geometric rigidity and the derivation of nonlinear plate theory from three-dimensional elasticity}.
	\newblock \textit{Comm.\ Pure Appl.\ Math.\ }
	  {\textbf{55}}(11), 1461--1506 (2002)

	
 
	
\bibitem{gahn}
\textsc{M.~Gahn:}
\newblock {Extension operators and Korn inequality for variable coefficients in perforated domains with applications to homogenization of viscoelastic non-simple materials.}
\newblock \textit{Calc.\ Var.\ Partial Differential Equations}
  {\textbf{63}}, Article number: 182 (2024)

	
	
	
	
	\bibitem{HealeyKroemer:09}
	\textsc{T.~J.~Healey, S.~Kr\"omer:}
	\newblock Injective weak solutions in second-gradient nonlinear elasticity.
	\newblock \textit{ESAIM Control Optim.\ Cal.\ Var.\ } \textbf{15}(4), 863--871 (2009)
	
	
	\bibitem{Jameson}
	\textsc{G.~Jameson:}
	\newblock Inequalities comparing $(a+b)^p - a^p - b^p$ and $a^{p-1} b + a b^{p-1}$.
	\newblock \textit{Elem.\ Math.\ } \textbf{69}, 156--161 (2014)
	
	
	
 
	 
	
	\bibitem{Kroemer}
	\textsc{S.~Kr\"omer, T.~Roub\'{\i}\v{c}ek:}
	\newblock Quasistatic viscoelasticity with self-contact at large strains.
	\newblock \textit{J.\ Elasticity} \textbf{142}, 433--445 (2020)
	
	%
	%
	%
	
	
 
	
	%
	%
	%
	%
	%
	%
	%
	%
	%
	%
	%
	%
	%
	%
	%
	%
 
 
	
	
	
	
	
	
	
	
	
	
	
	
	
	
	
	
	
	
	 
	
	
	
	
	
	\REV
	\bibitem{Lewick}
	\textsc{M.~Lewicka, P.~B.~Mucha:}
	\newblock A local existence result for system of viscoelasticity with physical viscosity. 
	\newblock \textit{Evol.~Equ.~Control Theory} \textbf{2}(2), 337--353 (2013)
	\END
	

	\bibitem{MOS} 
	\textsc{A.~Mielke, C.~Ortner, Y.~\c{S}eng\"ul:}
	\newblock An approach to nonlinear viscoelasticity via metric gradient flows.
	\newblock \textit{SIAM J.\ Math.\ Anal.\ } \textbf{46}, 1317--1347 (2014)
	
	
	\bibitem{MielkeRoubicek16Rateindependent}
	\textsc{A.~Mielke, T.~Roub\'{\i}\v{c}ek:}
	\newblock Rate-independent elastoplasticity at finite strains and its numerical approximation.
	\newblock \textit{Math.\ Models Methods Appl.\ Sci.} \textbf{26}(12), 2203--2236 (2016)
	
	
	
	
	\bibitem{MielkeRoubicek}
	\textsc{A.~Mielke, T.~Roub\'{\i}\v{c}ek:}
	\newblock Thermoviscoelasticity in Kelvin-Voigt rheology at large strains.
	\newblock \textit{Arch.\ Ration.\ Mech.\ Anal.\ } \textbf{238}(1), 1--45 (2020)
  
	
	 
	
	


 
 
	
	
	  
	
	
	
	
	
	
	%
	%
	%
	%
	%
	%
	
 
	%
	%
	\bibitem{Podio02Contact}
	\textsc{P.~Podio-Guidugli:}
	\newblock Contact interactions, stress, and material symmetry for nonsimple elastic materials.
	\newblock \textit{Theor.\ Appl.\ Mech.\ } \textbf{28--29}, 261--276 (2002)
	
	\REV
	\bibitem{potier-ferry-1}
	{\textsc{M.~Potier-Ferry:}} 
	\newblock The linearization principle for stability in quasilinear parabolic equations
	I.
	\newblock {\textit{Arch.~Ration.~Mech.~Anal.\ }} {\textbf{77}},  301--320 (1981)
	
	\bibitem{potier-ferry-2}
	 {\textsc{M.~Potier-Ferry:}}
	 \newblock On \newblock the \newblock mathematical \newblock foundations \newblock of \newblock elastic \newblock stability  \newblock theory \newblock I. 
	 \newblock {\textit{Arch.~Ration.~Mech.~Anal.\ }} {\textbf{78}},  55--72 (1982)
	
	\END
	
	\bibitem{Pompe}
	{\textsc{W.~Pompe:}}
	\newblock {Korn’s first inequality with variable coefficients and its generalization}.
	\newblock \textit{Comment.\ Math.\ Univ.\ Carolinae} 
	  {\textbf{44}}, 57--70 (2003)
	
	
	
	
	
	
	
	
	
	
	
	
	
	  

\bibitem{Roubicek23Eulerian}
\textsc{T.~Roub\'{\i}\v{c}ek:}
\newblock Interaction of finitely-strained viscoelastic multipolar solids and fluids by an Eulerian approach.
\newblock \textit{J.~Math.~Fluid Mech.\ } \textbf{25}, Article number: 81 (2023)



\bibitem{Roubicek23Eulerian2}
\textsc{T.~Roub\'{\i}\v{c}ek:}
\newblock Thermodynamics of viscoelastic solids, its Eulerian formulation, and existence of weak solutions.
\newblock \textit{Z.\ Angew.\ Math.\ Phys.\ } \textbf{75},  Article number: 51 (2024)


\bibitem{Roubicek23Eulerian3}
\textsc{T.~Roub\'{\i}\v{c}ek:}
\newblock Visco-elastodynamics at large strains Eulerian.
\newblock \textit{Z.~Angew.~Math.~Phys.\ } \textbf{73}, Article number: 80 (2022)



\bibitem{Roubicek23Eulerian4}
\textsc{T.~Roub\'{\i}\v{c}ek, U.~Stefanelli:}
\newblock Viscoelastodynamics of swelling porous solids at large strains by an Eulerian approach.
\newblock {\textit SIAM J.~Math.~Anal.\ } \textbf{55}(4), 2677--2703 (2023)


 
\REV
\bibitem{sengul2}
\textsc{Y.~\c{S}eng\"ul:}
\newblock A variational approach to frame-indifferent quasistatic viscoelasticity of rate type.
\newblock \textit{Phil.\ Trans.\ R.\ Soc.\ A.} \textbf{382}, 20230307 (2024)

\END


\bibitem{sengul}
\textsc{Y.~\c{S}eng\"ul:}
\newblock Nonlinear viscoelasticity of strain rate type: an overview.
\newblock \textit{Proc.\ R.\ Soc.\ A} \textbf{477}, 20200715 (2021)


	
	\bibitem{Toupin62}
	\textsc{R.~A.~Toupin:}
	\newblock Elastic materials with couple stresses.
	\newblock \textit{Arch.\ Ration.\ Mech.\ Anal.\ } \textbf{11}, 385--414 (1962)
	
	
	
	
	
	
	
	
	
	
	
	\bibitem{Toupin64}
	\textsc{R.~A.~Toupin:}
	\newblock Theory of elasticity with couple stress.
	\newblock \textit{Arch.\ Ration.\ Mech.\ Anal.\ } \textbf{17}, 85--112 (1964)
	

\REV
\bibitem{Tvedt}
\textsc{B.~Tvedt:}
\newblock Quasilinear Equations for Viscoelasticity.
\newblock \textit{Arch.\ Ration.\ Mech.\ Anal.\ } \textbf{189}, 237–-281 (2008)
\END
	
	
	\bibitem{liero}
	\textsc{W.~J.~M.~van~Oosterhout, M.~Liero:}
	\newblock Finite-strain poro-visco-elasticity with degenerate mobility.
	\newblock {\textit Z. Angew. Math. Mech.\ } {\textbf{104}}(5), e202300486 (2024)
	



	\bibitem{Oosterhout}
	\textsc{W.~J.~M.~van~Oosterhout:}
	\newblock Linearization of finite-strain poro-visco-elasticity with degenerate mobility. 
	\newblock \textit{Preprint:} https://arxiv.org/abs/2408.15151 (2024) 
	
	
 
%
%
	
 
	
	
	
	
	
	
\end{thebibliography}
\end{document}